\LetLtxMacro\todonotestodo\todo
\renewcommand{\todo}[2][]{\todonotestodo[#1]{TODO: {#2}}}
\newcommand\mymatrixbraceoffseth{0.7em}
\newcommand\mymatrixbraceoffsetv{0.7em}
\newcommand*\mymatrixbraceleft[4][m]{
    \draw[mymatrixbrace] ($(#1.north west)!(#1-#3-1.south west)!(#1.south west)-(\mymatrixbraceoffseth,1pt)$)
        -- node[left=2pt] {#4} 
        ($(#1.north west)!(#1-#2-1.north west)!(#1.south west)-(\mymatrixbraceoffseth,-6pt)$);
}
\newcommand*\mymatrixbraceright[4][m]{
    \draw[mymatrixbrace] ($(#1.north east)!(#1-#2-1.north east)!(#1.south east)+(\mymatrixbraceoffseth,2pt)$)
        -- node[right=2pt] {#4} 
        ($(#1.north east)!(#1-#3-1.south east)!(#1.south east)+(\mymatrixbraceoffseth,-2pt)$);
}
\newcommand*\mymatrixbracetop[4][m]{
    \draw[mymatrixbrace] ($(#1.north west)!(#1-1-#2.north west)!(#1.north east)+(-1pt,\mymatrixbraceoffsetv)$)
        -- node[above=2pt] {#4} 
        ($(#1.north west)!(#1-1-#3.north east)!(#1.north east)+(2pt,\mymatrixbraceoffsetv)$);
}
\newcommand*\mymatrixbracebottom[4][m]{
    \draw[mymatrixbrace] ($(#1.south west)!(#1-1-#3.south east)!(#1.south east)-(0,\mymatrixbraceoffsetv)$)
        -- node[below=2pt] {#4} 
        ($(#1.south west)!(#1-1-#2.south west)!(#1.south east)-(0,\mymatrixbraceoffsetv)$);
}
\newtheorem{theorem}{Theorem}
\newtheorem*{rep@theorem}{\rep@title}
\newcommand{\newreptheorem}[2]{%
\newenvironment{rep#1}[1]{%
 \def\rep@title{#2 \ref{##1}}%
 \begin{rep@theorem}}%
 {\end{rep@theorem}}}
\newtheorem{lemma}{Lemma}[section]
\newtheorem{proposition}[lemma]{Proposition}
\newtheorem{corollary}[lemma]{Corollary}
\newtheorem{fact}[lemma]{Fact}
\theoremstyle{definition}
\newtheorem{remark}[lemma]{Remark}
\newtheorem{definition}[lemma]{Definition}
\newtheorem*{claim*}{Claim}
\newtheorem{notation}[lemma]{Notation}
\newcommand{\A}{\mathbf{A}}
\newcommand{\B}{\mathbf{B}}
\newcommand{\Q}{\mathbb{Q}}
\newcommand{\Z}{\mathbb{Z}}
\newcommand{\N}{\mathbb{N}}
\newcommand{\val}{\mathbf{v}}
\newcommand{\calP}{\mathcal{P}}
\newcommand{\calJ}{\mathcal{J}}
\newcommand{\calR}{\mathcal{R}}
\newcommand{\rempn}{r_{n,p}}
\DeclareMathOperator{\rank}{rank}
\DeclareMathOperator{\spn}{span}
\DeclareMathOperator{\Int}{Int}
\DeclareMathOperator{\inspn}{\mathcal{I}}
\DeclareMathOperator{\out}{\mathcal{O}}
\newcommand{\outspn}[2]{\out_{[#1,#2]}}
\newcommand{\pblock}[2]{\B_{#1,#2}}
\title{Absolute irreducibility of the binomial polynomials}
\author{Roswitha Rissner}
\address[Roswitha Rissner]{Institut für Mathematik\\Alpen-Adria-Universität Klagenfurt\\
  Universitätsstraße 65-67\\9020 Klagenfurt am Wörthersee\\Austria}
\email{\href{mailto:roswitha.rissner@aau.at}{roswitha.rissner@aau.at}}
\thanks{R.~Rissner is supported by the Austrian Science Fund (FWF): P~28466}
\author{Daniel Windisch}
\address[Daniel Windisch]{Institut für Analysis und Zahlentheorie\\Technische Universität Graz\\
  Kopernikusgasse 24/II\\8010 Graz\\Austria}
\email{\href{mailto:dwindisch@math.tugraz.at}{dwindisch@math.tugraz.at}}
\thanks{D.~Windisch is supported by the Austrian Science Fund (FWF): P~30934}
\begin{document}

\maketitle

\begin{abstract}
  In this paper we investigate the factorization behaviour of the
  binomial polynomials
  $\binom{x}{n} = \frac{x(x-1)\cdots (x-n+1)}{n!}$ and their powers
  in the ring of integer-valued polynomials $\Int(\Z)$.
%
  While it is well-known that the binomial polynomials are irreducible
  elements in $\Int(\Z)$,
%
  the factorization behaviour of their powers has not yet been fully
  understood.
%
  We fill this gap and show that the binomial polynomials are
  absolutely irreducible in $\Int(\Z)$, that is, $\binom{x}{n}^m$
  factors uniquely into irreducible elements in $\Int(\Z)$ for all
  $m\in \N$.
%
  By reformulating the problem in terms of linear algebra and number
  theory, we show that the question can be reduced to determining the
  rank of, what we call, the valuation matrix of $n$.  A main
  ingredient in computing this rank is the following
  number-theoretical result for which we also provide a proof: If
  $n>10$ and $n$, $n-1$, \ldots, $n-(k-1)$ are composite integers,
  then there exists a prime number $p > 2k$ that divides one of these
  integers.

\end{abstract}

\section{Introduction}
\label{section:introduction}

In this work, our main objects of interest are the
so-called \textit{binomial polynomials} 
\begin{align*}
\binom{x}{n}  = \frac{x(x-1) \cdots (x-n+1)}{n!}
\end{align*}
for integers $n\ge 1$ as elements of
$\Int(\Z) = \{ f \in \Q[x] \mid f(\Z)\subseteq \Z \}$, where $\Z$
denotes the ring of integers and $\Q$ is the field of rational
numbers.

It is well-known that $n!$ divides the product of any $n$ consecutive
integers, and therefore $\binom{x}{n}$ indeed is an integer-valued
polynomial for all $n \geq 1$. Moreover, the binomial polynomials are
known to be irreducible in $\Int(\Z)$, cf.~the survey of Cahen and
Chabert~\cite{Cahen-Chabert:2016:survey}. A non-zero non-unit $a$ in a
(commutative) integral domain $D$ is said to be \textit{irreducible}
if $a = bc$ implies that either $b$ or $c$ is a unit for all
$b$, $c \in D$.

However, the irreducibility of $a$ does not tell anything about the
factorization behaviour of the powers $a^m$.  An irreducible element
$a$ is said to be \textit{absolutely irreducible} (or a \textit{strong
  atom}) if $a^m$ factors uniquely into irreducible elements for all
integers $m \geq 1$. There are plenty of non-absolutely irreducible
elements in $\Int(\Z)$, as it was shown lately by
Nakato~\cite{Nakato:2020:non-absolutely}. Moreover, the recent work of
Frisch and Nakato~\cite{Frisch-Nakato:2020:graph-theoretic} gives a
criterion for the absolute irreducibility of integer-valued
polynomials with square-free denominators over $\Z$. In particular, it
follows from their Theorem~2 that, for all prime numbers $p$, the
binomial polynomial $\binom{x}{p}$ is absolutely irreducible
(see~\cite[Example~2.6]{Frisch-Nakato:2020:graph-theoretic}). That the
latter holds was shown before by
McClain~\cite{McClain:2004:honorsthesis}.

In the present paper we show that the binomial polynomial
$\binom{x}{n} \in \Int(\Z)$ is absolutely irreducible for all
$n\geq 1$. Our approach also covers the case that $n=p$ is a prime
number, and hence serves in this special case also as an alternative
proof to the one in~\cite{Frisch-Nakato:2020:graph-theoretic}.

The binomial polynomials play a central role in the study of
$\Int(\Z)$ as they form a so-called regular $\Z$-module basis, that
is, a basis which contains exactly one polynomial of each degree.
Implicitly, this fact was already applied by Newton who used
integer-valued polynomials to interpolate integer-valued functions on
$\Z$.

Our viewpoint on the binomial polynomials in this paper relates to the
whole area of investigating non-unique factorizations.  When
mathematicians first explored that factorizations into irreducible
elements do not have to be unique in general, they considered this
behaviour as pathological and passed over to unique factorizations of
ideals into prime ideals in Dedekind domains. It is a movement of the
last few decades that non-unique factorizations are viewed in their
own right. Since then, the machinery for their investigation has been
developed mostly in the direction of Krull domains and monoids,
including for instance the concept of the divisor class group of a
Krull domain or monoid. For an introduction to this topic, we refer to
the textbook of Geroldinger and Halter-Koch~\cite{GHK}.

However, almost nothing is known in the area of non-unique
factorizations in non-Noetherian Prüfer domains.  In this context,
general rings of integer-valued polynomials
\begin{align*}
\Int(D) = \{f \in K[x] \mid f(D) \subseteq D \}
\end{align*}
where $D$ is an integral domain with quotient field $K$ are of
interest. The integral domain $\Int(D)$ has been studied very
intensively during the last decades and is a standard source for
examples and counterexamples. For instance, it is well-known that
$\Int(D)$ is a non-Noetherian Prüfer domain provided that $D$ is a
Dedekind domain with finite residue fields.  A profound introduction
to the theory of integer-valued polynomial is given in the textbook
of Cahen and Chabert~\cite{Cahen-Chabert:1997:book}, and a more recent
survey is their work~\cite{Cahen-Chabert:2016:survey}.

Coming back to factorization theory, Cahen and
Chabert~\cite{Cahen-Chabert:1995:iv-full-elasticity} studied
elasticity in rings of integer-valued polynomials and proved that
$\Int(D)$ has infinite elasticity for broad classes of domains $D$. In
a joint publication, Anderson, Cahen, Chapman, Scott and
Smith~\cite{Anderson-Cahen-Chapman-Scott-Smith:1995:factorization-iv}
expanded this investigation and further studied factorization
properties such as atomicity and elasticity in rings of
integer-valued polynomials. Strengthening previous results on infinite
elasticity, Chapman and
McClain~\cite{Chapman-McClain:2005:iv-full-elasticity} proved that
every rational number $r\ge 1$ is the elasticity of a polynomial $f$
that is integer-valued on an infinite subset $S$ of a unique
factorization domain $D$, that is, $r = \frac{L}{\ell}$ where $L$ and
$\ell$ are the maximum and mininum lengths, respectively, of
factorizations of $f$. Frisch~\cite{Frisch:2013:lengths} proved that
in $\Int(\Z)$ arbitrary sets of lengths can be realized. These results
were generalized by Frisch, Nakato and
Rissner~\cite{Frisch-Nakato-Rissner:2019:lengths} to rings of
integer-valued polynomials on Dedekind domains with infinitely many
maximal ideals which are all of finite index.  Both papers use a very
specific type of irreducible elements of $\Int(D)$ to realize the
aimed sets of lengths.

On the one hand, recognizing irreducible elements in $\Int(D)$ is in
general far from being trivial. A partial answer for $\Int(\Z)$ using
an algorithmic approach can be found
in~\cite{Antoniou-Nakato-Rissner:2018:table}. On the other hand, the
behavior of products of general irreducible elements in rings of
integer-valued polynomials is not at all understood.  Our result on
the absolute irreducibility of the binomial polynomials leads to a
better understanding of one important class of irreducible elements in
$\Int(\Z)$.  It also broadens the knowledge of a different aspect of
rings of integer-valued polynomials. It has been already mentioned
that $\Int(D)$ is a non-Noetherian Prüfer domain and is therefore
non-Krull, when $D$ is a Dedekind domain with finite residue
fields. Nevertheless, Reinhart~\cite{Reinhart:2014:monadic} showed
that if $D$ is any factorial domain, then $\Int(D)$ is
\textit{monadically Krull}, i.e., the so-called \textit{monadic
  submonoid}
\begin{align*}
[\![f]\!] = \{g \in \Int(D) \mid g \text{ divides } f^n \text{ for some } n\in \N \}
\end{align*}
is a Krull monoid for each $f \in
\Int(D)$. Frisch~\cite{Frisch:2014:monadic} extended this result to
integer-valued polynomial rings on Krull domains. Moreover,
Reinhart~\cite{Reinhart:2017:classgroup} showed that for a factorial
domain $D$ the divisor class group of a monadic submonoid $[\![f]\!]$
of $\Int(D)$ is free Abelian for every $f \in D[x] \setminus \{
0\}$. For general polynomials $f \in \Int(D)$, the structure of the
divisor class group of the monadic submonoid generated by $f$ is not
known, but it can be easily seen that the following two assertions are
equivalent for an irreducible $f \in \Int(D)$:
\begin{itemize}
\item The polynomial $f \in \Int(D)$ is absolutely irreducible.
\item The monoid $[\![f]\!]$ is factorial, i.e., its divisor class group is trivial.
\end{itemize}
Therefore our main result gives a description of a whole new family of
divisor class groups of monadic submonoids of $\Int(\Z)$.

This paper is structured as follows. In Section~\ref{section:results}
we present our two main results.
Theorem~\ref{theorem:binomialpolynomials} states that the binomial
polynomials are absolutely irreducible in $\Int(\Z)$.  A second
result, Theorem~\ref{theorem:numbertheory}, is a number-theoretic
result which we need in the proof of
Theorem~\ref{theorem:binomialpolynomials} but is interesting in its
own right. It states that given a sequence of $k$ consecutive
composite integers such that the largest integer is greater than $10$,
then one of the numbers of this sequence has a prime divisor $p > 2k$.

The remaining paper is dedicated to the proofs of the main results. In
Section~\ref{section:reformulation} we explain the strategy for the
proof of Theorem~\ref{theorem:binomialpolynomials} and introduce the
necessary notation. The main idea is to rephrase the question of
absolute irreducibility of the binomial polynomial $\binom{x}{n}$ in
terms of linear algebra and number theory. For this purpose, we
introduce, what we call, the \emph{valuation matrix} $\A_n$
(Definition~\ref{definition:A}) and show that $\binom{x}{n}$ is
absolutely irreducible in $\Int(\Z)$ if $\rank(\A_n) = n-1$
(Proposition~\ref{proposition:kernelA}). This point of view also
motivates the content of Section~\ref{section:numbertheory}, where we
present our number-theoretic toolbox, including a proof of
Theorem~\ref{theorem:numbertheory}.  Finally, in
Section~\ref{section:rank} we prove that $\rank(\A_n) = n-1$ holds for
all $n\in \N$ which is the final piece for proving the absolute
irreducibility of the binomial polynomial $\binom{x}{n}$.

\section{Results}
\label{section:results}

In this section, we present the main results of this paper. The
subsequent sections are dedicated to their proofs. 

\begin{theorem}\label{theorem:binomialpolynomials}
    The binomial polynomial $\binom{x}{n}$ is absolutely irreducible in
  $\Int(\Z)$ for all $n\in\N$.
\end{theorem}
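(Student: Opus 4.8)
The plan is to reduce the problem to a rank computation, following the roadmap the introduction lays out. By Proposition~\ref{proposition:kernelA}, it suffices to show that the valuation matrix $\A_n$ has rank exactly $n-1$ for every $n \in \N$. So the entire proof of Theorem~\ref{theorem:binomialpolynomials} becomes: \emph{establish $\rank(\A_n) = n-1$}. Since $\A_n$ has (presumably) $n-1$ columns, the upper bound $\rank(\A_n) \le n-1$ is free; the content is the \emph{lower} bound, i.e.\ that the columns of $\A_n$ are linearly independent over $\Q$ (equivalently, that the kernel is trivial).

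**First I would** unwind the definition of $\A_n$: its rows should be indexed by the primes $p$ relevant to $n!$ (those $p \le n$), with the $p$-th row recording the $p$-adic valuations $v_p\!\big(\tfrac{(x-j)}{\text{something}}\big)$ — more precisely the data of how $v_p(n!)$ gets distributed among the linear factors $x, x-1, \dots, x-(n-1)$ across the residue classes mod powers of $p$. The key structural observation is that for a prime $p$ with $p > n/2$ but $p \le n$, only one of the integers $n, n-1, \dots, n-(k-1)$ in a suitable window is divisible by $p$, and then only to the first power, so the corresponding row of $\A_n$ is essentially a standard basis vector (a single $1$). These "easy" primes let me peel off columns one at a time. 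The difficulty concentrates on the range where $n$ and its predecessors are all composite, so that no such large prime is available in the obvious way — and this is exactly where Theorem~\ref{theorem:numbertheory} enters: it guarantees that even among a block of consecutive composite integers with the largest exceeding $10$, there is a prime $p > 2k$ dividing one of them, which is precisely the arithmetic input needed to keep producing pivot rows and drive the rank down to $0$ in the kernel / up to $n-1$ in the matrix.

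**The hard part will be** the inductive rank argument itself: organizing the elimination so that each step legitimately removes one column and one row of the valuation matrix while preserving the "rank $=$ size $-1$" relation, and showing the base cases ($n \le 10$, say) by direct inspection. Concretely, I expect to argue that if $p$ is a prime with $2k < p \le n$ dividing some $n - i$ in the relevant block (which exists by Theorem~\ref{theorem:numbertheory} once the integers are composite and $n > 10$, and exists trivially via Bertrand-type bounds otherwise), then the row of $\A_n$ indexed by $p$ has a support that pins down one coordinate of any kernel vector to $0$; deleting that coordinate and that row leaves a matrix of the same shape for a smaller parameter, and induction finishes. I would also need to handle the residual primes $p \le 2k$ and the case where one of $n, \dots, n-(k-1)$ is itself prime (then that linear factor contributes a clean pivot directly). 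Assembling these cases into a single clean induction — and correctly matching the combinatorial structure of $\A_n$ to the number-theoretic statement — is the crux; the individual steps are routine once the bookkeeping is set up.
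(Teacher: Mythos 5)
Your plan is essentially the paper's own proof: reduce to showing $\rank(\A_n)=n-1$ via Proposition~\ref{proposition:kernelA}, use the largest prime $P\le n$ to handle the bulk of the columns, and invoke Theorem~\ref{theorem:numbertheory} to supply, for each $k\le n-P$, a prime $p>2k$ whose rows pin down the remaining ``outer'' columns by induction on $k$. Two small corrections to your picture of $\A_n$: it has $n$ columns, so the upper bound $\rank(\A_n)\le n-1$ is not free but comes from the vanishing row sums (Lemma~\ref{lemma:equality}), and each prime contributes a block of $|\calR_{n,p}|$ rows rather than a single row --- the exceptional enlargements of $\calR_{n,p}$ for $n=2^s$ and $n=9$ being exactly what rescues the base case of your induction when $n$ has no odd prime divisor.
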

The proof of Theorem~\ref{theorem:binomialpolynomials} is a
consequence of the results of Sections~\ref{section:reformulation}
and~\ref{section:rank}. To be more specific, the theorem is the
summarized statements of Corollaries~\ref{corollary:n=P}
and~\ref{corollary:ncomposite} and Remark~\ref{remark:n=01}.

\begin{corollary}
  The monadic submonoid $[\![ \binom{x}{n} ]\!]$ of $\Int(\Z)$ is
  factorial for all $n\ge 1$. In particular, it has trivial divisor
  class group.
\end{corollary}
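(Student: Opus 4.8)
The plan is to read this statement off from Theorem~\ref{theorem:binomialpolynomials} via the equivalence recalled in the introduction. Recall that, for any integral domain $D$ and any irreducible $f \in \Int(D)$, one has: $f$ is absolutely irreducible if and only if the monadic submonoid $[\![f]\!]$ is factorial, which in turn is equivalent to its divisor class group being trivial. So the only thing I would need to supply is that $\binom{x}{n}$ is an irreducible element of $\Int(\Z)$ which is, moreover, absolutely irreducible.

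The irreducibility of $\binom{x}{n}$ in $\Int(\Z)$ is classical (see the survey~\cite{Cahen-Chabert:2016:survey}), and absolute irreducibility is exactly the content of Theorem~\ref{theorem:binomialpolynomials}. Applying the above equivalence with $D = \Z$ and $f = \binom{x}{n}$ then gives directly that $[\![\binom{x}{n}]\!]$ is factorial for every $n \ge 1$.

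For the ``in particular'' clause I would argue as follows: since $\Z$ is a factorial domain, Reinhart's theorem~\cite{Reinhart:2014:monadic} shows that $\Int(\Z)$ is monadically Krull, so that $[\![\binom{x}{n}]\!]$ is a Krull monoid; and a Krull monoid is factorial precisely when its divisor class group vanishes, whence the divisor class group of $[\![\binom{x}{n}]\!]$ is trivial. I do not expect any genuine obstacle here — the corollary is a formal consequence of Theorem~\ref{theorem:binomialpolynomials} — the only point deserving a moment's care being the small values of $n$ (for instance $n = 1$, where $\binom{x}{1} = x$ is prime and $[\![x]\!]$ is simply free on $x$), but these are already subsumed by the statement of Theorem~\ref{theorem:binomialpolynomials}.
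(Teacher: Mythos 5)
Your proposal is correct and matches the paper's (implicit) argument exactly: the corollary is a formal consequence of Theorem~\ref{theorem:binomialpolynomials} combined with the equivalence, stated in the introduction, between absolute irreducibility of an irreducible $f\in\Int(D)$ and factoriality of $[\![f]\!]$, with the ``in particular'' clause handled via Reinhart's monadically-Krull result just as the paper intends.
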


The theory developed in Section~\ref{section:rank} heavily depends on
some number-theoretic results which are built up in
Section~\ref{section:numbertheory}. Next to well-known facts which are
collected there, we also prove the following theorem in this section.

\begin{theorem}\label{theorem:numbertheory}
  Let $n>10$ be an integer and $P$ the maximal prime number with $P\le n$. 

  If $2\le k\le n-P$ then there exists a prime number $p>2k$ which divides
  one of the numbers $n$, $n-1$, \ldots, $n-(k-1)$.
\end{theorem}

\section{Absolute irreducibility of binomial polynomials and the
  valuation matrix}
\label{section:reformulation}

In this section we introduce the \emph{valuation matrix} $\A_n$ which
is associated to the binomial polynomial $\binom{x}{n}$ and explain
how the question of its absolute irreducibility can be answered by
determining the rank of $\A_n$. First, we discuss the notion of
absolute irreducibility.

\begin{definition}\label{definition:abs-irred}
  Let $D$ be an integral domain and $b \in D$ be an irreducible
  element. We say that $b$ is \emph{absolutely irreducible} if $b^m$
  factors uniquely into irreducible elements for every $m\in \N$.
\end{definition}

\begin{remark}\label{remark:equivalence-abs-irr}
  Let $D$ be an integral domain and $b \in D$ be an irreducible
  element. A straight forward verification shows that the following
  assertions are equivalent:
  \begin{enumerate}
  \item $b$ is absolutely irreducible.
  \item For every non-negative integer $m$ and for all $f$, $g \in D$
    with $b^m = f \cdot g$, there exist non-negative integers $k$,
    $\ell$ and units $u$, $v\in D$ such that $f = ub^k$ and
    $g = vb^\ell$.
  \end{enumerate}
\end{remark}

\begin{remark}\label{remark:n=01}
Since $x^m$ factors uniquely in $\Int(\Z)$ for
  all $m\ge 1$, it follows immediately that $\binom{x}{1} = x$ is
  absolutely irreducible. This covers the case $n=1$.
\end{remark}

\begin{remark}\label{remark:rephrase-goal}
  For the rest of this work, fix a positive integer $n\ge 2$. Our goal
  is to show that for every positive integer $m$ and $f$, $g \in \Int(\Z)$ the following property holds:
  \begin{equation*}
    \binom{x}{n}^m = f \cdot g
    \;\Longrightarrow\;f= \pm \binom{x}{n}^k \text{ and } g = \pm \binom{x}{n}^\ell \text{ with } k,\ell \in \N_0.
  \end{equation*}
  Once this is shown, it follows by
  Remark~\ref{remark:equivalence-abs-irr} that $\binom{x}{n} \in \Int(\Z)$
  is absolutely irreducible.
\end{remark}

Our first step is to give a precise description of $f$ and $g$
exploiting the fact that $\Q[x]$ is a UFD.

\begin{proposition}\label{proposition:intfactors}
  Let $n$, $m\ge 2$ and $f$, $g\in \Int(\Z)$ with
  $\binom{x}{n}^m = f\cdot g$.

  Then there exist $k_i$ and $\ell_i\in \N_0$ with $k_i + \ell_i = m$
  for $0\le i\le n-1$ such that
  \begin{equation*}
    f = \pm \prod_{i=0}^{n-1} \left(\frac{x-i}{n-i} \right)^{k_i} \text{ and }
    g = \pm \prod_{i=0}^{n-1} \left(\frac{x-i}{n-i}\right)^{\ell_i}
  \end{equation*}
  holds.
\end{proposition}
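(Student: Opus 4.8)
The plan is to work in the UFD $\Q[x]$ and track where the linear factors of $\binom{x}{n}^m$ can go. Write $\binom{x}{n} = \frac{1}{n!}\prod_{i=0}^{n-1}(x-i)$, so that in $\Q[x]$ we have $\binom{x}{n}^m = c\prod_{i=0}^{n-1}(x-i)^m$ for the nonzero constant $c = (n!)^{-m}$. Since $f, g \in \Int(\Z) \subseteq \Q[x]$ and $\Q[x]$ is a UFD whose units are exactly $\Q^\times$, and the $x-i$ for $0 \le i \le n-1$ are pairwise non-associate irreducibles in $\Q[x]$, the equation $fg = c\prod_{i}(x-i)^m$ forces $f = a\prod_{i=0}^{n-1}(x-i)^{k_i}$ and $g = b\prod_{i=0}^{n-1}(x-i)^{\ell_i}$ for some $a, b \in \Q^\times$ and non-negative integers $k_i, \ell_i$ with $k_i + \ell_i = m$ for each $i$.

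The remaining point is to pin down the constants $a$ and $b$, i.e.\ to show that, up to sign, $a = \prod_i (n-i)^{-k_i}$ and $b = \prod_i (n-i)^{-\ell_i}$, which is exactly the claimed form. The idea is that $f$ and $g$ are integer-valued, so their content is constrained. Concretely, set $f_0 = \prod_{i=0}^{n-1}(x-i)^{k_i}$, a monic polynomial in $\Z[x]$, so $f = a f_0$; similarly $g = b g_0$ with $g_0 = \prod_i (x-i)^{\ell_i} \in \Z[x]$ monic. From $ab\, f_0 g_0 = c\prod_i (x-i)^m$ and $k_i + \ell_i = m$ we get $ab = c = (n!)^{-m}$. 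It remains to evaluate integrality: $f = a f_0 \in \Int(\Z)$ means $a \cdot f_0(t) \in \Z$ for all $t \in \Z$; choosing suitable integer arguments $t$ (for instance $t$ running over $0, 1, \dots, n-1$ and beyond) and using that $\gcd$ of the values $f_0(t)$ over a long enough run of integers divides a controllable quantity, one identifies the denominator of $a$ exactly. The cleanest route is to observe $f = \pm \prod_i \left(\frac{x-i}{n-i}\right)^{k_i}$ is the unique way to write $a f_0$ with $a \in \Q^\times$ so that $f \in \Int(\Z)$ and $fg = \binom{x}{n}^m$: indeed $\prod_i \left(\frac{x-i}{n-i}\right)^{k_i} = \binom{x}{n}^{?}$-type products are integer-valued because each $\frac{x-i}{n-i}$ divides $\binom{x}{n}$ in $\Int(\Z)$ up to the other factors, and then $a f_0$ integer-valued together with $ab = (n!)^{-m}$ and $b g_0$ integer-valued forces the split of the denominator $(n!)^m = \prod_i (n-i)^m = \prod_i (n-i)^{k_i} \cdot \prod_i (n-i)^{\ell_i}$ to be exactly along the $k_i/\ell_i$ partition.

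I expect the main obstacle to be the last step: showing that the denominator of $a$ cannot be any proper divisor of $\prod_i (n-i)^{k_i}$, i.e.\ that no extra cancellation between the numerator content of $f_0$ and the denominator is possible. This amounts to a $p$-adic statement: for each prime $p$ and each $i$ with $p \mid (n-i)$, the $p$-adic valuation of $a$ must be exactly $-\sum\{v_p(n-j)\, k_j : 0 \le j \le n-1\}$ forced by integer-valuedness, and one checks this by evaluating $f$ at an integer $t$ chosen $p$-adically close to the relevant roots (e.g.\ $t \equiv 0 \pmod{p^N}$ for large $N$), using that $v_p(f_0(t))$ can be made to equal $\sum_j v_p(t - j)\,k_j$ with the $t-j$ contributing minimally for $j \ne 0$. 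Carrying this out prime-by-prime and then reassembling via the Chinese Remainder Theorem gives the precise constant, and the sign ambiguity $\pm$ absorbs the unit of $\Z$.
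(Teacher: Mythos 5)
Your first step (the unique factorization in $\Q[x]$, giving $f = a\prod_{i=0}^{n-1}(x-i)^{k_i}$, $g = b\prod_{i=0}^{n-1}(x-i)^{\ell_i}$ with $k_i+\ell_i=m$ and $ab=(n!)^{-m}$) matches the paper exactly. But the second half --- pinning down $a$ and $b$ --- is where your write-up has a genuine gap: you describe a programme ($p$-adic analysis of the content of $f_0$, evaluation at arguments $t\equiv 0 \pmod{p^N}$, reassembly by CRT) but do not carry it out, and you say yourself that this is the main obstacle. Along the way you also assert that products of the form $\prod_i\bigl(\frac{x-i}{n-i}\bigr)^{k_i}$ are integer-valued ``because each $\frac{x-i}{n-i}$ divides $\binom{x}{n}$ in $\Int(\Z)$ up to the other factors''; this is false for general exponent vectors $(k_i)$ (e.g.\ $\frac{x}{4}\notin\Int(\Z)$), and indeed if it were true the whole paper would be moot. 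Fortunately the proposition does not require it: it only asserts that factors which \emph{happen} to be integer-valued must have this shape. Your suggested test points also pull in the wrong direction: taking $t\equiv 0\pmod{p^N}$ makes $\val_p(f_0(t))$ large, whereas the constraint $\val_p(a)\ge -\val_p(f_0(t))$ is strongest where $\val_p(f_0(t))$ is \emph{small}.

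The missing idea is a one-line observation: evaluate $\binom{x}{n}^m = f\cdot g$ at $x=n$. Then $f(n)g(n) = \binom{n}{n}^m = 1$, and since $f,g\in\Int(\Z)$ both values are integers, hence $f(n)=\pm1$ and $g(n)=\pm1$. Plugging $x=n$ into $f = a\prod_i(x-i)^{k_i}$ gives $a = \pm\prod_{i=0}^{n-1}(n-i)^{-k_i}$, and likewise for $b$, which is exactly the claimed form. Your two-sided $p$-adic squeeze (lower bound on $\val_p(a)$ from $f\in\Int(\Z)$, lower bound on $\val_p(b)$ from $g\in\Int(\Z)$, combined with $\val_p(a)+\val_p(b)=-m\,\val_p(n!)$) can in principle be completed, but when done correctly it collapses to precisely this evaluation at $t=n$, where $\val_p(f_0(n))+\val_p(g_0(n))=m\,\val_p(n!)$ is attained. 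As written, the proposal is not a proof.
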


\begin{proof}
  Since both sides of the equality $\binom{x}{n}^m = f \cdot g$ factor
  uniquely in $\Q[x]$ into irreducible elements, it follows that
  \begin{equation}\label{eq:rationals}
    f = q_1\prod_{i=0}^{n-1}\left(x-i\right)^{k_i} \text{ and } g = q_2\prod_{i=0}^{n-1}
    \left(x-i\right)^{\ell_i}
  \end{equation}
  for $q_1$, $q_2 \in \Q$ and non-negative integers $k_0$, \ldots,
  $k_{n-1}$ and $\ell_0$, \ldots, $\ell_{n-1}$ with $k_i + \ell_i = m$
  for all $i \in \{0, \ldots, n-1 \}$. 

  Evaluating $\binom{x}{n}^m = f \cdot g$ at $x = n$ implies
  $f(n) \cdot g(n) = 1$. Moreover, as $f$, $g \in \Int(\Z)$, it follows
  that $f(n) = \pm 1$ and $g(n) = \pm 1$.  This observation together
  with Equations~\eqref{eq:rationals} imply that
  \begin{equation*}
    \frac{1}{q_1} = \pm \prod_{i = 0}^{n-1} (n-i)^{k_i} \text{ and } \frac{1}{q_2} = \pm \prod_{i = 0}^{n-1} (n-i)^{\ell_i}.
  \end{equation*}
  The assertion follows.
\end{proof}

\begin{remark}\label{remark:goal-klequ}
  Let $k_0$, \ldots, $k_{n-1}$ and $\ell_0$, \ldots,
  $\ell_{n-1} \in \N_0$ be the exponents of the factors $f$ and $g$ of
  $\binom{x}{n}^m$, cf.~Proposition~\ref{proposition:intfactors}.  If
  $k_0 = k_1 = \cdots = k_{n-1}$ (and hence
  $\ell_0 = \ell_1 = \cdots = \ell_{n-1}$), then
  $f = \binom{x}{n}^{k_0}$ and $g = \binom{x}{n}^{\ell_0}$.

  Having Remark~\ref{remark:rephrase-goal} in mind, we aim to show
  that, for all possible factors $f$ and $g$, the corresponding
  exponents satisfy $k_0 = k_1 = \cdots = k_{n-1}$ and
  $\ell_0 = \ell_1 = \cdots = \ell_{n-1}$ in order to prove that the
  binomial polynomial $\binom{x}{n}$ is absolutely irreducible.
\end{remark}

We reformulate the task at hand into a homogeneous system of linear
equations for which the vectors $(k_0,\ldots, k_{n-1})^t$ and
$(\ell_0,\ldots, \ell_{n-1})^t$ are solutions.  Given the form of
possible factors $f$ and $g$ of $\binom{x}{n}^m$ in $\Int(\Z)$ (see
Proposition~\ref{proposition:intfactors}), it is natural to rephrase
the integer-valued condition in terms of $p$-adic valuations.

\begin{notation}\label{notation:valuation}
  For $w \in \mathbb{Q}$ and a prime number $p$, we denote by
  $\val_p(w)$ the $p$-adic valuation of $w$.
\end{notation}

\begin{remark}\label{remark:ivall}
  Let $k_0$, \ldots, $k_{n-1}$ and $\ell_0$, \ldots, $\ell_{n-1}$ be
  the exponents of the factors $f$ and $g$ of $\binom{x}{n}^m$,
  cf.~Proposition~\ref{proposition:intfactors}.  Since $f$ and $g$ are
  integer-valued polynomials, it follows that
  \begin{align*}
    \val_p(f(s)) &= \sum_{j=0}^{n-1} (\val_p(s-j) - \val_p(n-j))k_j  \geq 0 \text{ and }\\
    \val_p(g(s)) &= \sum_{j=0}^{n-1} (\val_p(s-j) - \val_p(n-j))\ell_j \geq 0
  \end{align*}
  for all $s \in \Z$ and all prime numbers $p$.
\end{remark}

For our purposes, it turns out to be sufficient to consider $p$-adic
valuations for prime numbers $p\le n$ and integers of the form
$s = n+r$ for $r \in \{1,\ldots, p-\rempn-1\}$ where
$\rempn$ is the uniquely determined integer with $0 \le \rempn \le p-1$ and
$n \equiv \rempn \mod p$. However, there are two cases for which this
range of $r$ is not sufficient, that is, when $n=2^s$ with $s>1$ and
$p=2$ we also need $r=2$ and in case $n=9$ and $p=3$ we also need $r=3$ and $r=4$.  This motivates the following notation which we
use throughout the remainder of this paper.

\begin{notation}\label{notation:pr}
  For $n\in \N$, we use the following notation. 
  \begin{enumerate}
  \item $\calP_n = \{p \mid 0 < p\le n \text{ prime number}\}$
  \item For $p\in \calP_n$, let $0\le \rempn < p$
    be the uniquely determined integer with $n \equiv \rempn \mod p$.
  \item For $p\in \calP_n$, we set
    \begin{equation*}
      \calR_{n,p} = 
      \begin{cases}
        \{1,2\} & \text{ if } n = 2^s \text{ with } s>1 \text{ and } p=2 \\
        \{1,2,3,4\} & \text{ if } n=9 \text{ and } p=3 \\
        \{r \mid 1\le r \le p -\rempn-1\} & \text{ else. }
      \end{cases}
    \end{equation*}
  \end{enumerate}
\end{notation}

As mentioned above, our goal is to reformulate the question whether
the binomial polynomial $\binom{x}{n}$ is absolutely irreducible as a
homogeneous linear equation system. 

\begin{definition}\label{definition:A}
  For $n\ge 2$, we define the \emph{valuation matrix} $\A_n$ of $n$ by
  \begin{equation*}
    \A_n = \left(\val_p(n+r-j)-\val_p(n-j) \right)_{\substack{p\in \calP_n, r\in \calR_{n,p} \\ 0 \le j \le n-1}},
  \end{equation*}
  cf.~Notation~\ref{notation:pr}. 
\end{definition}

\begin{remark}\label{remark:Ak>=0}
  Let $\mathbf{k} = (k_0, \ldots, k_{n-1})^t$ and
  $\mathbf{l}=(\ell_0, \ldots, \ell_{n-1})^t \in \N_0^n$ be (the
  vectors of) the exponents of the factors $f$ and $g$ of
  $\binom{x}{n}^m$, cf.~Proposition~\ref{proposition:intfactors}.  The
  condition that $f$ and $g$ are integer-valued immediately implies
  that
  \begin{equation*}
    \A_n\mathbf{k} \ge 0 \text{ and } \A_n\mathbf{l} \ge 0,
  \end{equation*}
  cf.~Remark~\ref{remark:ivall}.
\end{remark}

Our next step is to show that the exponent vectors
$\mathbf{k} = (k_0, \ldots, k_{n-1})^t$ and
$\mathbf{l}=(\ell_0, \ldots, \ell_{n-1})^t\in \N_0^n$ of possible integer-valued
factors $f$ and $g$ of $\binom{x}{n}^m$ are actually solutions to the
homogeneous linear equation system $\A_nx = 0$,
cf.~Remark~\ref{remark:Ak>=0}. Before we prove this in
Proposition~\ref{proposition:kernelA}, we need the following lemma
which states that the row sums of $\A_n$ equal zero.

\begin{lemma}\label{lemma:equality}
  Let $n\in \N$, $p\in \calP_n$ and $r\in \calR_{n,p}$,
  cf.~Notation~\ref{notation:pr}.
  Then
  \begin{equation*}
   \sum_{j=0}^{n-1} (\val_p(n+r-j)-\val_p(n-j)) = 0.
  \end{equation*}
\end{lemma}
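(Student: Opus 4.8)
The plan is to identify both sums over $j$ with $p$-adic valuations of factorials. As $j$ ranges over $0,\dots,n-1$, the quantity $n-j$ ranges over $1,\dots,n$, so $\sum_{j=0}^{n-1}\val_p(n-j)=\sum_{a=1}^{n}\val_p(a)=\val_p(n!)$; likewise $n+r-j$ ranges over $r+1,\dots,n+r$ (all positive, since $r\ge 1$), so $\sum_{j=0}^{n-1}\val_p(n+r-j)=\sum_{a=r+1}^{n+r}\val_p(a)=\val_p((n+r)!)-\val_p(r!)$. Hence the asserted identity is equivalent to
\[
\val_p((n+r)!)-\val_p(n!)-\val_p(r!)=\val_p\binom{n+r}{r}=0,
\]
that is, to $p\nmid\binom{n+r}{r}$.

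To prove this I would appeal to Kummer's theorem: $\val_p\binom{n+r}{r}$ equals the number of carries that occur when $n$ and $r$ are added in base $p$ (equivalently, one can argue directly from Legendre's formula $\val_p(m!)=\sum_{i\ge 1}\lfloor m/p^i\rfloor$). It then suffices to verify, in each of the cases of Notation~\ref{notation:pr}, that the base-$p$ addition of $n$ and $r$ produces no carry. In the generic case $1\le r\le p-\rempn-1$, the integer $r$ is a single base-$p$ digit and the units digit of $n$ is $\rempn$; the digits in position $0$ then sum to $r+\rempn\le p-1<p$, and since $r$ has no further digits, no carry ever occurs. If $n=2^s$ with $s>1$ and $p=2$, the binary expansion of $n$ has a $0$ in positions $0$ and $1$ (this is exactly where the hypothesis $s>1$ enters), whereas $r\in\{1,2\}$ has nonzero digits only in those positions, so again no carry occurs; the case $n=9$, $p=3$ is handled identically, since $9=(100)_3$ and every $r\in\{1,2,3,4\}$ has digits only in positions $0$ and $1$.

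I do not anticipate a genuine obstacle. The only point requiring slight care is the two sporadic cases, where the range of admissible $r$ exceeds the ``safe'' bound $r<p-\rempn$; but these involve only finitely many pairs $(n,r)$, for which $p\nmid\binom{n+r}{r}$ can also be checked by direct computation. If one prefers to avoid Kummer's theorem, one can instead show term-by-term that $\lfloor(n+r)/p^i\rfloor=\lfloor n/p^i\rfloor+\lfloor r/p^i\rfloor$ for every $i\ge 1$, using the nested-floor identity $\bigl\lfloor\lfloor a/p\rfloor/p^{i-1}\bigr\rfloor=\lfloor a/p^i\rfloor$, which reduces everything to the same elementary digit bookkeeping.
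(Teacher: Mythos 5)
Your proof is correct, but it takes a genuinely different route from the paper. You convert both sums into factorial valuations, so that the row sum becomes $\val_p\binom{n+r}{r}$, and then kill this quantity via Kummer's theorem by checking that adding $n$ and $r$ in base $p$ produces no carries; the three cases of Notation~\ref{notation:pr} are handled uniformly by the same digit bookkeeping (in the generic case $r+\rempn\le p-1$ in the units digit, and in the sporadic cases $n=2^s$ and $n=9$ the low-order digits of $n$ vanish). The paper instead argues directly on the two arithmetic progressions: in the generic case it observes that the multiples of $p$ occurring among $n+r-j$ and among $n-j$ are the same set $\{p,2p,\dots,qp\}$, so the two sums coincide term-for-term; for $n=2^s$, $p=r=2$ it uses an ad hoc telescoping pairing $v_j+v_{j+2}=0$; and for $n=9$, $p=3$, $r\in\{3,4\}$ it writes out the two rows explicitly. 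Your reformulation buys a single unified mechanism that explains \emph{why} the exceptional sets $\calR_{n,p}$ in Notation~\ref{notation:pr} are exactly the carry-free ranges, at the cost of importing Kummer's theorem (or, as you note, an equivalent Legendre-formula computation); the paper's argument is longer and more case-driven but entirely self-contained. Both are complete proofs.
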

\begin{proof}
  Let $p\in \calP_n$ and $q$ and $\rempn$ be the uniquely determined
  integers such that $n = qp+\rempn$ and $0\le \rempn \le p-1$. First,
  we assume that $1\le r \le p-\rempn-1$ (this covers all cases except
  the one where $n$ is a power of 2 and $r = p = 2$ and the one where
  $r\in \{3,4\}$, $p=3$ and $n=9$).  Then for all $0 \le j \le n-1$ at
  most one of the numbers $n+r-j$ and $n-j$ is divisible by $p$.
  Since
  \begin{equation*}
    2 = n +1 - (n-1) \le n+r-j\le n+(p-\rempn-1) = qp+(p-1), 
  \end{equation*}
  it follows that $n+r-j = kp$ if and only if $k\in \{1,\ldots,q\}$
  and hence
  \begin{equation*}
    \sum_{j=0}^{n-1} \val_p(n+r-j) = \sum_{k=1}^{q}\val_p(kp).
  \end{equation*}
  Similarly, since $1 \le n-j \le n = qp+\rempn$, it follows that
  \begin{equation*}
    \sum_{j=0}^{n-1} \val_p(n-j) = \sum_{k=1}^{q}\val_p(kp)
  \end{equation*}
  which proves the assertion in this case.
  
  Next, we discuss the case where $n=2^s$ is a power of 2 with $s > 1$ and
  $r = p=2$. We want to show that
  $\sum_{j=0}^{n-1} (\val_2(n+2-j)-\val_2(n-j)) = 0$.

  Let $v_j = \val_2(n+2-j)-\val_2(n-j)$. As $n$ is even,
  $v_j=0$ whenever $j$ is odd. Moreover, for $j\in 4\Z$,
  $\val_2(n+2-j) = \val_2(n-2-j) = 1$ and $\val_2(n-j) > 1$.
  Therefore, for $j\in 4\Z$, 
  \begin{equation*}
    v_j + v_{j+2} = \val_2(n+2-j)-\val_2(n-j) + \val_2(n-j)-\val_2(n-2-j) = 0
  \end{equation*}
  which implies that
  \begin{align*}
    \sum_{j=0}^{n-1} v_j = \sum_{\substack{j=0 \\ j\in 2\Z}}^{n-1} v_j = \sum_{\substack{j=0 \\ j\in 4\Z}}^{n-1} v_j + v_{j+2} = 0.
  \end{align*}

  Finally, it remains to discuss the case where $n=9$, $p=3$ and
  $r=3$, $4$. The two rows are
  \begin{equation*}
    \begin{pmatrix}
      -1 & 0 & 0 &  1 & 0 & 0 & 0 & 0 & 0 \\
      -2 & 1 & 0 & -1 & 2 & 0 & -1 & 1 & 0 \\
    \end{pmatrix}.
  \end{equation*}
  An easy computation verifies the claim and the assertion follows.
  \end{proof}

  Lemma~\ref{lemma:equality} is the key to show that the exponent
  vectors $\mathbf{k} = (k_0, \ldots, k_{n-1})^t$ and
  $\mathbf{l}=(\ell_0, \ldots, \ell_{n-1})^t$ of potential factors $f$
  and $g$ of $\binom{x}{n}^m$ can be viewed as the solutions of the
  homogeneous equation system $\A_n x = 0$, or equivalently, as
  elements of $\ker(\A_n)$.

\begin{proposition}\label{proposition:eqs}
  Let $n$, $m\ge 2$ be integers and $\mathbf{k} = (k_0, \ldots, k_{n-1})^t$ and
  $\mathbf{l} = (\ell_0, \ldots, \ell_{n-1})^t \in \N_0^n$ such that
  \begin{equation*}
    \prod_{i=0}^{n-1} \left( \frac{x-i}{n-i}\right)^{k_i}, \prod_{i=0}^{n-1} \left( \frac{x-i}{n-i}\right)^{\ell_i} \in \Int(\Z)
  \end{equation*}
  and $k_j + \ell_j = m$ for all $0\le j \le n-1$. 

  Then $\mathbf{k}$, $\mathbf{l} \in \ker(\A_n)$. 
\end{proposition}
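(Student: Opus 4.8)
The plan is to deduce the statement from three ingredients that are already in place: the vanishing of the row sums of $\A_n$ (Lemma~\ref{lemma:equality}), the entrywise nonnegativity $\A_n\mathbf{k}\ge 0$, $\A_n\mathbf{l}\ge 0$ coming from integer-valuedness (the valuation computation of Remark~\ref{remark:ivall}, i.e. Remark~\ref{remark:Ak>=0}), and the coupling $k_j+\ell_j=m$.

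First I would record the trivial but crucial observation that $\mathbf{k}+\mathbf{l}=m\mathbf{1}$, where $\mathbf{1}=(1,\dots,1)^t\in\Z^n$. Next, Lemma~\ref{lemma:equality} says precisely that each row of $\A_n$ sums to $0$, which is the same as $\A_n\mathbf{1}=0$; by linearity, $\A_n(\mathbf{k}+\mathbf{l})=m\,\A_n\mathbf{1}=0$, and hence $\A_n\mathbf{k}=-\A_n\mathbf{l}$.

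On the other hand, since the products $\prod_{i=0}^{n-1}\bigl(\tfrac{x-i}{n-i}\bigr)^{k_i}$ and $\prod_{i=0}^{n-1}\bigl(\tfrac{x-i}{n-i}\bigr)^{\ell_i}$ lie in $\Int(\Z)$, evaluating at the integers $s=n+r$ for $p\in\calP_n$, $r\in\calR_{n,p}$ and passing to $p$-adic valuations — exactly the computation carried out in Remark~\ref{remark:ivall} — yields $\A_n\mathbf{k}\ge 0$ and $\A_n\mathbf{l}\ge 0$ entrywise. Combining this with $\A_n\mathbf{k}=-\A_n\mathbf{l}$, each entry of $\A_n\mathbf{k}$ is simultaneously $\ge 0$ and $\le 0$, so it is $0$; the same holds for $\A_n\mathbf{l}$. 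Therefore $\A_n\mathbf{k}=\A_n\mathbf{l}=0$, that is, $\mathbf{k},\mathbf{l}\in\ker(\A_n)$.

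There is essentially no genuine obstacle in this proposition itself: the substance lies in Lemma~\ref{lemma:equality} (whose non-generic cases $n=2^s$ and $n=9$ were the delicate part) and in the translation of integer-valuedness into the entrywise inequality $\A_n\mathbf{k}\ge0$. The only minor subtlety to watch is that the hypotheses here are phrased directly as membership in $\Int(\Z)$ rather than as ``$f,g$ are factors of $\binom{x}{n}^m$'', so the cleanest route is to invoke the valuation identity of Remark~\ref{remark:ivall} directly rather than the statement of Remark~\ref{remark:Ak>=0} verbatim; the two amount to the same thing.
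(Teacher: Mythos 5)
Your proof is correct and follows essentially the same route as the paper: both deduce $\A_n(\mathbf{k}+\mathbf{l})=0$ from the vanishing row sums of Lemma~\ref{lemma:equality}, combine this with the entrywise inequalities $\A_n\mathbf{k}\ge 0$ and $\A_n\mathbf{l}\ge 0$ coming from integer-valuedness, and conclude that both vectors lie in $\ker(\A_n)$.
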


\begin{proof}
  The integer-valued condition in the hypothesis implies that
  $\A_n\mathbf{k} \ge 0$ and $\A_n\mathbf{l} \ge 0$,
  cf.~Remark~\ref{remark:Ak>=0}.

  Moreover, by Lemma~\ref{lemma:equality}, 
  \begin{align*}
    0 &= m \cdot \left(\sum_{j=0}^{n-1} (\val_p(n+r-j)-\val_p(n-j))\right)_{p\in \calP_n, r\in \calR_{n,p}} \\
      &= \left(\sum_{j=0}^{n-1} (\val_p(n+r-j)-\val_p(n-j))\cdot (k_j + l_j)\right)_{p\in \calP_n, r\in \calR_{n,p}} \\
      &= \A_n(\mathbf{k}+\mathbf{l}) = \A_n \mathbf{k} + \A_n \mathbf{l} \\
  \end{align*}
  holds and therefore $\A_n \mathbf{k}= \A_n \mathbf{l} = 0$.
\end{proof}

\begin{remark}\label{remark:dimker>0}
  It follows from Lemma~\ref{lemma:equality} that
  $(k)_{0\le j \le n-1}$ are elements of $\ker(\A_n)$ for all
  $k\in \Q$. In particular, this implies that
  $(1)_{0\le j \le n-1} \in \ker(\A_n)$ and 
  $\rank(\A_n) < n$.

  Moreover, note that the exponent vector of $\binom{x}{n}^k$ (written
  in the form of Proposition~\ref{proposition:intfactors}) is a scalar
  multiple of the element $(1)_{0\le j \le n-1}$.
\end{remark}

This brings us to the main result of this section which states that,
in order to show that $\binom{x}{n}$ is absolutely irreducible, it
suffices to prove that the rank of the valuation matrix $\A_n$ of $n$
is exactly $n-1$.

\begin{proposition}\label{proposition:kernelA}
  Let $n\ge 2$ be an integer and $\A_n$ the valuation matrix of~$n$.

  If $\rank(\A_n) = n-1$, then $\binom{x}{n}$ is absolutely irreducible.
\end{proposition}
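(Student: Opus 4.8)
The plan is to combine Proposition~\ref{proposition:intfactors}, which pins down the shape of any factorization $\binom{x}{n}^m = f\cdot g$, with Proposition~\ref{proposition:eqs}, which says the exponent vectors $\mathbf{k}$ and $\mathbf{l}$ lie in $\ker(\A_n)$, and then use the rank hypothesis to force those vectors to be constant. Concretely, I would start by fixing an arbitrary $m\ge 1$ and an arbitrary factorization $\binom{x}{n}^m = f\cdot g$ with $f,g\in\Int(\Z)$, reduce immediately to the case $m\ge 2$ (the case $m=1$ being the known irreducibility of $\binom{x}{n}$), and invoke Proposition~\ref{proposition:intfactors} to write $f = \pm\prod_{i=0}^{n-1}\bigl(\frac{x-i}{n-i}\bigr)^{k_i}$ and $g = \pm\prod_{i=0}^{n-1}\bigl(\frac{x-i}{n-i}\bigr)^{\ell_i}$ with $k_i+\ell_i=m$ and $k_i,\ell_i\in\N_0$.

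Next I would apply Proposition~\ref{proposition:eqs} to conclude $\mathbf{k},\mathbf{l}\in\ker(\A_n)$. By Remark~\ref{remark:dimker>0} the constant vector $\mathbf{1}=(1,\ldots,1)^t$ lies in $\ker(\A_n)$, and the hypothesis $\rank(\A_n)=n-1$ gives $\dim\ker(\A_n)=1$, so $\ker(\A_n)=\Q\cdot\mathbf{1}$. Hence there are scalars $\lambda,\mu\in\Q$ with $\mathbf{k}=\lambda\mathbf{1}$ and $\mathbf{l}=\mu\mathbf{1}$; since the entries are non-negative integers, $\lambda=k_0\in\N_0$ and $\mu=\ell_0\in\N_0$. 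This means $k_0=k_1=\cdots=k_{n-1}$ and $\ell_0=\cdots=\ell_{n-1}$, which by Remark~\ref{remark:goal-klequ} yields $f=\pm\binom{x}{n}^{k_0}$ and $g=\pm\binom{x}{n}^{\ell_0}$ with $k_0+\ell_0=m$.

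Finally, this is exactly the implication recorded as the goal in Remark~\ref{remark:rephrase-goal}, so Remark~\ref{remark:equivalence-abs-irr} lets me conclude that $\binom{x}{n}$ is absolutely irreducible. I do not expect any genuine obstacle in this proof: the entire argument is a short assembly of the preceding results, and the only point requiring any care is the passage from $\mathbf{k}\in\ker(\A_n)$ with $\dim\ker(\A_n)=1$ to "$\mathbf{k}$ is a non-negative integer multiple of $\mathbf{1}$," which is immediate once one observes that $\mathbf{1}$ already spans the kernel. All the real work — establishing that the row sums of $\A_n$ vanish, that the exponent vectors are killed by $\A_n$, and (in the later sections) that $\rank(\A_n)=n-1$ — is done elsewhere; here one merely needs to chain the pieces together.
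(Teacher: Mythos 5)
Your proposal is correct and follows essentially the same route as the paper's own proof: invoke Proposition~\ref{proposition:intfactors} and Proposition~\ref{proposition:eqs} to place the exponent vectors in $\ker(\A_n)\cap\N_0^n$, use $\rank(\A_n)=n-1$ together with the fact that $(1,\ldots,1)^t\in\ker(\A_n)$ to conclude the kernel is spanned by the constant vector, and finish via Remark~\ref{remark:rephrase-goal}. Your explicit handling of the $m=1$ case is a minor extra care the paper omits, but otherwise the arguments coincide.
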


\begin{proof}
  Let $m\ge 2$ be an integer and assume that $f$, $g\in \Int(\Z)$ such
  that $\binom{x}{n}^m = f\cdot g$. According to
  Propositions~\ref{proposition:intfactors} and~\ref{proposition:eqs}
  it follows that
  \begin{equation*}
    f = \pm \prod_{i=0}^{n-1} \left(\frac{x-i}{n-i} \right)^{k_i} \text{ and }
    g = \pm \prod_{i=0}^{n-1} \left(\frac{x-i}{n-i}\right)^{\ell_i}
  \end{equation*}
  where $(k_0,\ldots, k_{n-1})^t$,
  $(\ell_0,\ldots, \ell_{n-1})^t \in \ker(\A_n) \cap \N_0^{n}$. 

  Since $\dim\ker(\A_n) = n-\rank(\A_n)= 1$ and
  $(1)_{0\le j \le n-1} \in \ker(\A_n)$ by
  Lemma~\ref{lemma:equality}, it follows that
  $\ker(\A_n) = \spn_\Q\{(1)_{0\le j\le n-1}\}$. Therefore,
  $k_0 = k_1 = \cdots = k_{n-1}$ and
  $\ell_0 = \ell_1 = \cdots = \ell_{n-1}$ which implies that
  $f = \binom{x}{n}^{k_0}$ and $g = \binom{x}{n}^{\ell_0}$.

  It follows that $\binom{x}{n}$ is absolutely irreducible
  by~Remark~\ref{remark:rephrase-goal}.
\end{proof}

To be able to prove that $\rank(\A_n) = n-1$ for all integers $n\ge 2$
in Section~\ref{section:rank}, we first need to show
Theorem~\ref{theorem:numbertheory} which is part of our
number-theoretic toolbox.

\section{Number-theoretic toolbox}
\label{section:numbertheory}

The goal of this section is to prove
Theorem~\ref{theorem:numbertheory}. Given an integer $n>10$, let $P$
be the maximal prime number with $P\le n$. We show that for every
$2\le k\le n-P$ there exists a prime number $p>2k$ which divides one of the
numbers $n$, $n-1$, \ldots, $n-k+1$. Note that the condition
$k\le n-P$ is equivalent to $n$, $n-1$, \ldots, $n-k+1$ being
composite numbers.

The literature provides us a collection of number-theoretic facts
which, putting the pieces together, give a proof of
Theorem~\ref{theorem:numbertheory}. We split the proof into cases and
present partial results on their own. We start with the case for large
$n$.

\begin{proposition}\label{proposition:nthlargen}
  Let $2 \leq k<n$ be positive integers with $n\ge 4,021,520$.

  If $n$, $n-1$,\ldots, $n-k+1$ are composite numbers, then
  one of them has a prime divisor $p>2k$. 
\end{proposition}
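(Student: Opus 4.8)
The plan is to suppose, for contradiction, that every prime divisor of every one of the $k$ consecutive composite integers $n, n-1, \ldots, n-k+1$ is at most $2k$, and to derive a contradiction from a counting estimate when $n$ is large. Write $m = n-k+1$ for the smallest of these integers, so the interval $I = [m, n]$ contains $k$ integers, all composite, and — under the contradiction hypothesis — all of them are $2k$-smooth. First I would argue that $k$ cannot be too small relative to $n$: since $n$ itself is composite and all its prime factors are $\le 2k$, and more importantly since an interval of $k$ consecutive integers contains a prime whenever $k$ is large compared to $\log n$ (by Bertrand-type / prime-gap results, or even just because the product of the primes up to $2k$ must be large enough to ``cover'' the interval), we get a lower bound forcing $k$ to grow with $n$; conversely $k < n$ bounds it from above. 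The key point is that we may assume $k$ is reasonably large, say $k \ge$ some explicit constant, handling the finitely many small $k$ separately if needed.

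The heart of the argument is a smooth-number count. Let $\Psi(x, y)$ denote the number of $y$-smooth integers up to $x$. Under the contradiction hypothesis, all $k$ integers in $[m, n]$ are $2k$-smooth, so the interval $[m,n]$ contributes $k$ smooth integers; since $m \le n$ and the count of $2k$-smooth integers up to $n$ is $\Psi(n, 2k)$, we certainly need $\Psi(n,2k) \ge k$. That alone is too weak, so instead I would localize: one shows that a short interval near $n$ cannot contain so many $y$-smooth numbers unless $y$ is large relative to $n$. Concretely, if $u = \frac{\log n}{\log 2k}$, then standard estimates (e.g. $\Psi(n,2k) \le n \cdot u^{-u(1+o(1))}$, or explicit versions due to Canfield–Erdős–Pomerance / Konyagin–Pomerance for the ranges in play) give that the \emph{density} of $2k$-smooth numbers up to $n$ is $u^{-u(1+o(1))}$, hence for the block of $k$ consecutive integers to all be smooth we would need $k/n \lesssim$ that density. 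Combined with $k < n$ this pins $u$ into a bounded range, i.e. $2k$ is forced to be at least a fixed power of $n$, say $2k \ge n^{c}$; feeding this back, the interval $[m,n]$ of length $k \ge \tfrac12 n^{c}$ must then contain a prime by a prime-gap bound valid for $n \ge 4{,}021{,}520$ (this is exactly where the explicit threshold enters — it is the range in which an effective version of the prime-number-theorem-with-error, or an explicit Bertrand-type result for intervals of the form $[x, x + x^{\theta}]$, is known to apply), contradicting that all integers in the interval are composite.

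I would organize the write-up as: (1) set up the contradiction hypothesis and notation $m, I, u$; (2) a lemma bounding the number of $y$-smooth integers in an interval, quoted from the literature in explicit form; (3) deduce $2k \gg n^{c}$ for an explicit $c$; (4) invoke an explicit prime-gap result to find a prime in $[m,n]$, contradicting compositeness; (5) check that the numerical threshold $4{,}021{,}520$ suffices to make steps (3) and (4) simultaneously valid.

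\medskip

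I expect the main obstacle to be step (4)–(5): matching up an \emph{explicit} smooth-number density bound with an \emph{explicit} prime-gap (or prime-in-short-interval) bound so that their ranges of validity overlap precisely at $n = 4{,}021{,}520$. The smooth-number side is delicate because for $n$ just above the threshold, $2k$ is not yet a large power of $n$ and the asymptotic $u^{-u}$ heuristic is weak; one likely needs the sharper known inequalities for $\Psi(x,y)$ in the ``moderate'' range $y = x^{o(1)}$, together with a careful choice of how large a prime $p > 2k$ to chase — possibly it is cleaner to look not for a prime in the interval but for a single integer in $[m,n]$ divisible by a prime in a dyadic block $(2k, 4k]$, and to count how many integers up to $n$ fail to have such a divisor. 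Getting the constants to close without an unwieldy case analysis is the real work; the structural idea (smooth numbers are too sparse to fill an interval of length $k$ near $n$ unless the smoothness bound is already large, at which point primes reappear) is straightforward.
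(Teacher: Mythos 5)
Your proposal is a strategy sketch rather than a proof, and the strategy has a genuine gap at its core: the smooth-number counting step does not close in the range where it is needed. Under the compositeness hypothesis, $k \le n-P$ where $P$ is the largest prime $\le n$, so by Fact~\ref{fact:nextprimebound} one has $k < P/16597$; for $n$ near the threshold $4{,}021{,}520$ this forces $k$ to be at most a few hundred. With $2k$ of that size and $n$ in the millions, the parameter $u = \log n/\log(2k)$ is only about $2.5$, so the density of $2k$-smooth numbers up to $n$ is roughly $u^{-u} \approx 0.1$ --- far too large for a density argument to rule out a block of $k$ consecutive smooth integers. You acknowledge this weakness yourself, but the fallback you gesture at (explicit bounds for smooth numbers \emph{in short intervals}) is not available in the literature in anything like the required strength; localizing $\Psi(x,y)$ to an interval of length $k \approx x^{o(1)}$ is a hard open problem, and your inequality ``$k/n \lesssim$ density'' does not follow from bounds on $\Psi(n,2k)$ alone, since smooth numbers could a priori cluster in one short interval. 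Consequently steps (2)--(3) of your outline cannot be carried out as described, and without them step (4) never gets off the ground.

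The paper's proof goes in essentially the opposite direction and is much more elementary. Instead of trying to show $k$ must be large, it exploits that $k$ is automatically \emph{small}: compositeness gives $n-k+1 \ge P+1$, and the explicit prime-gap bound of Fact~\ref{fact:nextprimebound} (valid since $P > n/2 \ge 2{,}010{,}760$ by Bertrand) shows $P+1 > \max\{(n-P)+13, \tfrac{279}{262}(n-P)\}$. This verifies the hypothesis $m > \max\{k+13,\tfrac{279}{262}k\}$ of the Laishram--Shorey theorem (Fact~\ref{fact:mkbound}) with $m = n-k+1$, which directly asserts that $m(m+1)\cdots(m+k-1) = (n-k+1)\cdots n$ has a prime factor exceeding $2k$. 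The entire analytic difficulty you identify is thus outsourced to a single cited result about the greatest prime factor of a product of consecutive integers; if you want to pursue your own route, you would essentially have to reprove a statement of that type, which is substantially harder than the threshold bookkeeping.
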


For the proof we use the following facts from the literature.

\begin{fact}[{Bertrand's postulate}]\label{fact:bertrand}
  For all integers $n\ge 3$, there exists a prime number $p$ with
  $\frac{n}{2} < p < n$.
\end{fact}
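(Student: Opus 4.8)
The plan is to reproduce Erd\H{o}s's elementary proof, which estimates the prime factorisation of the central binomial coefficient $\binom{2N}{N}$. First I would reduce the stated form to the more symmetric claim that \emph{for every integer $N\ge 1$ there is a prime $p$ with $N<p\le 2N$}. Indeed, given $n\ge 3$, write $n=2N+1$ (if $n$ is odd, so $N\ge 1$) or $n=2N$ (if $n$ is even, so $N\ge 2$). A prime with $N<p\le 2N$ then satisfies $p>N\ge n/2$ on the one hand; on the other hand $p\le 2N<n$ in the odd case, while in the even case $2N$ is composite and hence $p<2N=n$. In every case $n/2<p<n$, as required.

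To prove the reduced claim, I would combine one lower and one upper bound on $\binom{2N}{N}$. For the lower bound, $\binom{2N}{N}$ is the largest among the $2N+1$ summands of $(1+1)^{2N}=4^N$, so $\binom{2N}{N}\ge 4^N/(2N+1)$. For the upper bound I would use Legendre's formula: the exponent of a prime $p$ in $\binom{2N}{N}$ equals $\sum_{i\ge 1}\bigl(\lfloor 2N/p^i\rfloor-2\lfloor N/p^i\rfloor\bigr)$, a sum of terms each equal to $0$ or $1$ and vanishing once $p^i>2N$; hence $p^{\val_p(\binom{2N}{N})}\le 2N$. Two consequences are crucial: primes $p>\sqrt{2N}$ divide $\binom{2N}{N}$ to at most the first power, and (for $N\ge 5$) no prime with $2N/3<p\le N$ divides $\binom{2N}{N}$ at all, since for such $p$ the only possibly nonzero term is $\lfloor 2N/p\rfloor-2\lfloor N/p\rfloor=2-2=0$. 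Finally I would invoke the Chebyshev-type primorial estimate $\prod_{p\le x}p\le 4^{x}$, proved by induction using $\binom{2m+1}{m}\le 4^m$.

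Assume now, for contradiction, that no prime lies in $(N,2N]$. Together with the gap $(2N/3,N]$ just described, every prime divisor of $\binom{2N}{N}$ is at most $2N/3$, and separating the primes below and above $\sqrt{2N}$ gives
\[
  \binom{2N}{N}\le (2N)^{\sqrt{2N}}\prod_{p\le 2N/3}p\le (2N)^{\sqrt{2N}}\,4^{2N/3}.
\]
Comparing with the lower bound yields $4^{N/3}\le (2N+1)(2N)^{\sqrt{2N}}$, and taking logarithms the left-hand side grows linearly in $N$ whereas the right-hand side grows like $\sqrt{N}\log N$; the inequality therefore fails for all $N$ above an explicit threshold (classically $N\ge 468$), which is the desired contradiction. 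For the finitely many remaining $N$ I would verify the claim by hand, exhibiting the chain of primes $2,3,5,7,13,23,43,83,163,317,631$ in which each term is smaller than twice its predecessor, so that for each such $N$ one of them falls in $(N,2N]$.

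The technical heart --- and the step I expect to cost the most effort --- is the primorial bound $\prod_{p\le x}p\le 4^{x}$ together with the careful balancing of the resulting inequality, so that the exponential lower bound $4^N$ overtakes the subexponential upper bound $(2N)^{\sqrt{2N}}4^{2N/3}$ with an explicit, verifiable threshold; the accompanying finite check of the small values of $N$ is routine but must be carried out to make the argument complete.
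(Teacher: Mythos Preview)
The paper does not prove this statement; Bertrand's postulate is simply recorded as a classical fact and used as a black box (in the proof of Proposition~\ref{proposition:nthlargen} and in Lemma~\ref{lemma:column-grouping}). Your outline reproduces Erd\H{o}s's standard elementary argument and is correct: the reduction to the symmetric form $N<p\le 2N$ is handled carefully (including the observation that $2N$ is composite when $n=2N\ge 4$, so $p\le 2N$ forces $p<n$), and the subsequent ingredients --- the Legendre-formula bound on prime-power divisors of $\binom{2N}{N}$, the vanishing of the contribution from primes in $(2N/3,N]$ for $N\ge 5$, the primorial estimate $\prod_{p\le x}p\le 4^{x}$, and the finite verification via a chain of primes each less than twice its predecessor --- are all stated accurately. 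You have therefore supplied strictly more than the paper does, and there is nothing in the paper to compare your approach against.
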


\begin{fact}[{\cite[Theorem~12]{Schoenfeld:1976:chebyshev}}]\label{fact:nextprimebound}
  Let $m$ be an integer with $m \geq 2,010,760$. Then there exists a
  prime number $p$ such that $m < p < (1 + \frac{1}{16597})m$.
\end{fact}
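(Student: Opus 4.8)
The plan is to reduce the statement to a divisibility property of the binomial coefficient $\binom{n}{k}$ and then run a Sylvester--Erd\H{o}s-type smoothness estimate. First I would observe that it suffices to produce a prime $p>2k$ dividing $\binom{n}{k}$: since $p>k$ implies $\val_p(k!)=0$, we have $\val_p\!\left(\binom{n}{k}\right)=\val_p\!\left(\prod_{j=0}^{k-1}(n-j)\right)$, and any prime dividing the product $n(n-1)\cdots(n-k+1)$ must divide one of its factors $n-j$ with $0\le j\le k-1$. Thus a prime factor $p>2k$ of $\binom{n}{k}$ automatically divides one of $n,n-1,\dots,n-k+1$. Before the main estimate I would record that the composite hypothesis forces $k$ to be small. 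Since $n,\dots,n-k+1$ are composite, the largest prime $P\le n$ satisfies $P\le n-k$; by Bertrand's postulate (Fact~\ref{fact:bertrand}) there is a prime in $(n/2,n)$, so $P>n/2\ge 2{,}010{,}760$ exactly when $n\ge 4{,}021{,}520$, which makes the short-interval bound applicable. Applying Fact~\ref{fact:nextprimebound} at $m=P$ gives $n<(1+\tfrac{1}{16597})P$, whence $k\le n-P<\tfrac{n}{16598}$. In particular every factor $n-j$ exceeds $n/2$, which I would use to keep the lower bound on $\binom{n}{k}$ clean. This is where the threshold $4{,}021{,}520=2\cdot 2{,}010{,}760$ originates.

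For the main case $k\ge 5$ I would argue by contradiction: suppose every prime factor of $\binom{n}{k}$ is $\le 2k$. The standard bound $\val_p\!\left(\binom{n}{k}\right)\le \log_p n$ gives $p^{\val_p(\binom{n}{k})}\le n$ for each such $p$, and hence
\[
  \binom{n}{k}=\prod_{p\in\calP_{2k}} p^{\val_p(\binom{n}{k})}\le n^{\pi(2k)}.
\]
On the other hand $\binom{n}{k}\ge \left(\tfrac{n}{k}\right)^{k}$, and the smallness of $k$ even yields $\binom{n}{k}\ge\left(\tfrac{n-k+1}{k}\right)^{k}$ with $n-k+1>\tfrac{n}{2}$. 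Taking logarithms, the two inequalities force $(k-\pi(2k))\ln n\le k\ln k$. I would then invoke a Chebyshev-type upper bound for $\pi(2k)$, which gives $\pi(2k)\le k-1$ for all $k\ge 5$, so the left-hand side is bounded below by a positive multiple of $\ln n$ while the right-hand side is only $k\ln k$; combined with $k<\tfrac{n}{16598}$, a short computation shows the inequality is violated for $n\ge 4{,}021{,}520$, the desired contradiction.

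The three remaining cases $k\in\{2,3,4\}$ need separate treatment, because there $\pi(2k)=k$ and the counting above degenerates. In these cases the assertion is that among $k$ consecutive integers not all can be $2k$-smooth: there are no two consecutive $3$-smooth integers, no three consecutive $5$-smooth integers, and no four consecutive $7$-smooth integers once the numbers are large. I would settle these by the (effectively computable) Størmer-type classification of runs of consecutive smooth integers, all of which occur far below $4{,}021{,}520$; hence the claim holds in these cases as well for $n\ge 4{,}021{,}520$.

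I expect the main obstacle to be the uniform verification of the inequality $(k-\pi(2k))\ln n> k\ln k$ across the whole admissible range $5\le k\le n-P$, and in particular for $k\in\{5,6,7\}$, where $k-\pi(2k)=1$ and the estimate is tightest; making this rigorous requires a sufficiently sharp explicit bound on $\pi(2k)$ together with the precise lower bound on $\binom{n}{k}$ furnished by $k<\tfrac{n}{16598}$. The careful bookkeeping for the degenerate cases $k\in\{2,3,4\}$, and checking that the Størmer-type runs really all lie below the threshold, are the other places where care is needed. By contrast, the naive prime-gap strategy of seeking a prime $p\approx n/2$ with $2p\in[n-k+1,n]$ cannot work here, since the admissible $k$ are all smaller than the prime-gap scale $\tfrac{n}{16598}$, so the number-theoretic smoothness estimate is unavoidable.
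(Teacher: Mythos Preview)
Your proposal does not address the stated fact at all. Fact~\ref{fact:nextprimebound} is Schoenfeld's short-interval prime bound, quoted from \cite{Schoenfeld:1976:chebyshev} as an external input; the paper offers no proof of it, and indeed your own argument \emph{invokes} it (``Applying Fact~\ref{fact:nextprimebound} at $m=P$ gives\ldots''). What you have actually sketched is a proof of Proposition~\ref{proposition:nthlargen}, the large-$n$ case of Theorem~\ref{theorem:numbertheory}. So as a submission for the present statement it is simply off-target.

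Read instead as an attack on Proposition~\ref{proposition:nthlargen}, your route differs from the paper's. After using Bertrand and Schoenfeld to bound $n-P$, the paper immediately appeals to the Laishram--Shorey theorem (Fact~\ref{fact:mkbound}) as a black box; you try to reprove that ingredient from scratch via a Sylvester--Erd\H{o}s smoothness bound on $\binom{n}{k}$. That is a legitimate strategy, and for $k\ge 5$ the contradiction $(k-\pi(2k))\ln n\le k\ln k$ combined with $k-\pi(2k)\ge 1$ and $k<n/16598$ can be made to work, but the ``short computation'' you defer is not uniform across all admissible $k$ and would require explicit Chebyshev-type bounds on $\pi(2k)$ to be rigorous. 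Your handling of $k\in\{2,3,4\}$ by an unspecified ``St{\o}rmer-type classification'' is only a gesture; the paper deals with these cases (in the proof of Theorem~\ref{theorem:numbertheory}, not of Proposition~\ref{proposition:nthlargen}) by concrete pigeonhole arguments using Catalan's conjecture and the verified instance of Pillai's conjecture for difference~$2$, and something equally explicit would be needed here.
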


\begin{fact}[{\cite[Theorem~1]{Laishram-Shorey:2005:greatestprimedivisor}}]\label{fact:mkbound}
  Let $m$ and $k \geq 2$ be positive integers such that
  $m > \max \{k + 13, \frac{279}{262}k \}$.

  Then the product $m(m+1) \cdots (m+k-1)$ has a prime factor greater
  than~$2k$.
\end{fact}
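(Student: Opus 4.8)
The plan is to use the Sylvester--Erdős method via binomial coefficients and argue by contradiction. Since $k!$ is composed only of primes $\le k < 2k$, the product $\Delta = m(m+1)\cdots(m+k-1)$ has a prime factor exceeding $2k$ if and only if the binomial coefficient $\binom{N}{k} = \Delta/k!$ does, where $N = m+k-1$. So it suffices to show that $\binom{N}{k}$ has a prime factor $>2k$, and I would assume the contrary: that every prime dividing $\binom{N}{k}$ is at most $2k$. The basic tool is Kummer's bound on prime-power exponents, $v_p\!\left(\binom{N}{k}\right)\le \log_p N$, equivalently $p^{\,v_p(\binom{N}{k})}\le N$. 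Combined with the elementary lower bound $\binom{N}{k}\ge (N/k)^k$, this yields the inequality $k\log(N/k)\le \sum_{p\le 2k} v_p\!\left(\binom{N}{k}\right)\log p$, which is the statement I want to contradict.

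The engine is a range-by-range estimate of the right-hand side. For $p>\sqrt{N}$ the exponent bound forces $v_p\le 1$, and for $p$ in $(k,2k]$ one has $v_p=1$ exactly when $p$ divides one of the $k$ consecutive integers $m,\ldots,m+k-1$; since such a $p$ exceeds their span $k-1$, it divides at most one of them. Splitting $\sum_{p\le 2k}$ into the range $p\le \sqrt{N}$ (bounded crudely by $\pi(\sqrt{N})\log N$) and the range $\sqrt{N}<p\le 2k$ (bounded by $\theta(2k)$, using only first powers), and inserting explicit Chebyshev-type estimates for $\theta$ and $\pi$ of Rosser--Schoenfeld type, I would obtain an upper bound for $\sum_{p\le 2k} v_p\log p$. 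When $N$ is large compared to $k$ --- say $N\ge k^{1+\delta}$ --- the factor $\log(N/k)$ on the left outgrows this bound and the contradiction is immediate. The real work is the near-extremal regime $N\approx 2k$, i.e. $m$ barely above $k$, which is exactly where the hypothesis $m>\tfrac{279}{262}k$ lives and where the two sides are of the same order of magnitude.

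The main obstacle is precisely this critical regime. Here a crude count of the primes in $(k,2k]$ is far too wasteful, and I would instead have to track which primes of $(k,2k]$ actually hit the window $\{m,\ldots,m+k-1\}$ and combine this with a counting argument controlling the $k$-smooth parts of these integers, extracting every bit of sharpness from the explicit $\theta$- and $\pi$-bounds. This analysis only closes for $k$ beyond some explicit threshold $k_0$; the remaining finite range $2\le k\le k_0$, together with the values of $m$ in the narrow window just above $\max\{k+13,\tfrac{279}{262}k\}$, would be disposed of by direct computation. It is exactly this boundary computation that produces --- and is optimized to produce --- the explicit constants $k+13$ and $\tfrac{279}{262}$ appearing in the hypothesis.
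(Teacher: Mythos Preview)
The paper does not prove this statement at all: it is stated as a \emph{Fact} and cited verbatim from Laishram and Shorey~\cite{Laishram-Shorey:2005:greatestprimedivisor}, then used as a black box in the proof of Proposition~\ref{proposition:nthlargen}. So there is no ``paper's own proof'' to compare your attempt against.

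That said, your outline is the correct high-level strategy --- it is precisely the Sylvester--Erd\H{o}s method via $\binom{N}{k}$ and Kummer's bound that Laishram and Shorey themselves refine --- but as written it is a plan rather than a proof. You correctly identify the critical regime $m$ just above $k$ as where all the work lies, and then defer that work to ``extracting every bit of sharpness'' and ``direct computation''. That deferred part \emph{is} the theorem: obtaining the specific constants $k+13$ and $\tfrac{279}{262}$ requires a careful case analysis and substantial explicit estimation (combinatorial counting of smooth parts, sharp $\theta$- and $\psi$-bounds, and a nontrivial finite check) spanning the bulk of the cited paper. Your sketch does not contain a genuine gap in the sense of a wrong idea, but it also does not supply the argument; it names the method and the obstacle without overcoming the obstacle. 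For the purposes of the present paper, simply citing the result --- as the authors do --- is the appropriate choice.
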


\begin{proof}[Proof of Proposition~\ref{proposition:nthlargen}]
  Let $P$ be the largest prime number with $P\le n$. First, we prove
  the following

  \textbf{Claim.} $P + 1 > \max\{(n-P)+13, \frac{279}{262}(n-P)\}$

  Assume for a moment that the claim holds. By hypothesis, the numbers
  $n$, $n-1$, \ldots, $n-k+1$ are composite numbers which implies that
  $n-k+1 > P$ or, equivalently, $n-P \ge k$. Therefore
  \begin{equation*}
    n-k+1 \ge P+1 >\max\left\{(n-P)+13, \frac{279}{262}(n-P)\right\}\ge \max\left\{k+13, \frac{279}{262}k\right\}
  \end{equation*}
  and the assertion follows from Fact~\ref{fact:mkbound} with
  $m=n-k+1$.
  
  It remains to prove the claim.  Let $Q$ be the smallest prime number
  with $Q>n$. As $P>\frac{n}{2}\ge 2,010,760$ holds according to
  Fact~\ref{fact:bertrand}, it follows from
  Fact~\ref{fact:nextprimebound} that $Q < (1+ \frac{1}{16597})P$.
  
  Therefore, $2P - n > 2P-Q > (1-\frac{1}{16597})P > 12$ which implies
  that $P+1 > (n-P)+13$. Moreover, since
  $2(1+\frac{1}{16597}) < 1+\frac{279}{262}$ it follows that
  $\frac{279}{262}n -1 < 2n < 2Q < (1+\frac{279}{262})P $ and hence
  $P+1 > \frac{279}{262}(n-P)$. This completes the proof of the claim.
\end{proof}

It remains to discuss the case where $n < 4,021,520$. Here, we can use
a result of Laishram and
Shorey~\cite{Laishram-Shorey:2006:grimm}. They proved Grimm's
conjecture for sequences of consecutive positive integers whose
minimum does not exceed a certain bound.
\begin{fact}[{\cite[Theorem~1]{Laishram-Shorey:2006:grimm}}]\label{fact:grimmsmalln}
  Let $k<n$ such that $n-k+1 \le 1.9\cdot 10^{10}$.

  If $n$, $n-1$, \ldots, $n-k+1$ are composite numbers then there
  exist pairwise distinct prime numbers $p_0$, $p_1$, \ldots, $p_{k-1}$ with
  $p_i \mid n-i$ for all $0\le i \le k-1$.
\end{fact}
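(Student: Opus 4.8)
The plan is to translate the existence of the distinct prime divisors into a bipartite matching problem, dispatch the bulk of the cases by number-theoretic estimates, and settle the remainder by a finite computation. First I would build the bipartite graph $G$ whose left vertices are the composite integers $n, n-1, \ldots, n-k+1$, whose right vertices are the primes, and in which $n-i$ is adjacent to $p$ exactly when $p \mid n-i$. Producing pairwise distinct primes $p_i \mid n-i$ is the same as finding a matching of $G$ saturating the left side, i.e.\ a system of distinct representatives. By Hall's marriage theorem this is possible if and only if, for every subset $S \subseteq \{n, n-1, \ldots, n-k+1\}$, the number of primes dividing at least one element of $S$ is at least $|S|$. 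Thus the statement reduces to verifying this Hall condition for the block $[n-k+1, n]$.

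Next I would analyse which subsets could violate the condition. Only subsets whose members are composed of few small primes are dangerous; the extreme case is a composite prime power $p^e$, contributing the single prime $p$. These are tightly constrained: if two powers $p^a < p^b$ of one prime both lie in the block, then $p^a(p^{b-a}-1) = p^b - p^a \le k-1$, forcing $p^a \le k-1$, so only small bases can repeat. To dispatch the dangerous subsets when $k$ is small relative to the smallest block element, I would invoke estimates for the number of distinct prime factors of a block of consecutive integers (of Sylvester--Erd\H{o}s and Ramachandra--Shorey--Tijdeman type): these show that any $t$ of the integers $n, \ldots, n-k+1$ already have at least $t$ distinct prime divisors, so Hall's condition holds for all such blocks automatically.

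The point of the hypothesis $n-k+1 \le 1.9\cdot 10^{10}$ is that it makes the leftover cases finite and explicitly enumerable. Since $n, \ldots, n-k+1$ are consecutive composites with smallest element below the cutoff, the length $k$ of the block cannot exceed the largest prime gap occurring below that bound, which is a few hundred; hence $n$ itself is also bounded and only finitely many blocks remain. For each of these I would verify the matching directly by a bipartite matching algorithm rather than by checking all $2^k$ subsets, constructing the representatives $p_0, \ldots, p_{k-1}$ outright. The cutoff $1.9\cdot 10^{10}$ is precisely the range to which this computation, combined with the earlier theoretical reduction, has been pushed.

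The hard part is the intermediate regime, where $k$ is not small compared with $\log(n-k+1)$ and $n$ is only moderately large: there the asymptotic distinct-prime-factor estimates are too weak to force Hall's condition, while the family of blocks is far too large for naive enumeration. The crux is to sharpen the relevant smooth-number and greatest-prime-factor bounds enough to shrink this regime to a computation of feasible size; balancing these sharp sieve estimates against a bounded but substantial machine verification is exactly the technical heart of the cited theorem of Laishram and Shorey.
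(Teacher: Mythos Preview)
The paper does not prove this statement at all: it is recorded as a \emph{Fact} with a citation to \cite{Laishram-Shorey:2006:grimm}, and is used as a black box in the proof of Theorem~\ref{theorem:numbertheory}. So there is no ``paper's own proof'' to compare against.

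That said, your sketch is a fair high-level summary of how results of this type (Grimm's conjecture up to a numerical bound) are actually established in the literature, including in the cited paper: one reformulates the problem as finding a system of distinct representatives via Hall's marriage theorem, reduces the Hall condition to controlling the number of distinct prime factors of products of consecutive integers, invokes Sylvester--Erd\H{o}s/Shorey-type estimates to handle blocks with $k$ small relative to $n-k+1$, and then uses prime-gap bounds below the cutoff to leave only finitely many blocks, which are checked computationally. Your identification of the ``intermediate regime'' as the genuine difficulty is also accurate. As a proof plan this is essentially correct in outline, though of course the substance lies in making the explicit constants in the analytic estimates meet the computational range, which your sketch acknowledges but does not carry out. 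For the purposes of the present paper, however, none of this is needed: the result is simply quoted.
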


\begin{remark}\label{remark:needsmalln}
  Let $p_k$ denote the $k$-th prime number. It is easily seen that
  $p_k > 2k$ for $k\ge 5$. Therefore, under the hypothesis of
  Fact~\ref{fact:grimmsmalln}, there exists a prime number $p>2k$
  which divides one of the numbers $n$, $n-1$, \ldots, $n-k+1$.
\end{remark}

We treat the cases $k=2$, $3$, $4$ in the proof below where we use the
following facts.
\begin{fact}[{Catalan's conjecture, \cite{Mihailescu:2004:catalan}}]\label{fact:catalan}
  Mih\u{a}ilescu proved Catalan's conjecture which states that the only
  consecutive positive integers which are non-prime prime powers are $8$
  and $9$.
\end{fact}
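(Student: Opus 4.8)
The statement is Catalan's conjecture, proved by Mih\u{a}ilescu: the only two consecutive perfect powers are $8 = 2^3$ and $9 = 3^2$. Since a non-prime prime power $p^a$ (with $a \ge 2$) is in particular a perfect $\ell$-th power for any prime $\ell \mid a$, it suffices to prove the general perfect-power version, namely that $x^p - y^q = 1$ has no solution in integers $x, y \ge 1$ and primes $p, q$ other than $3^2 - 2^3 = 1$. I do not expect an elementary argument; the plan is to follow the cyclotomic route. First I would reduce to the case that $p$ and $q$ are both \emph{odd} primes: the subcases where one exponent equals $2$ are classical and may be cited, namely Lebesgue's argument (factoring $y^2 + 1$ over $\Z[i]$) for $x^p - y^2 = 1$ with $p$ odd, Ko Chao's theorem for $x^2 - y^q = 1$ with $q \ge 5$, and Euler's treatment of $x^2 - y^3 = 1$, the last of which produces exactly $9 - 8 = 1$.

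For odd primes $p, q$ the real work begins. I would first invoke Cassels' relations, $q \mid x$ and $p \mid y$, together with the resulting factorizations of $x - 1$ and $(x^p - 1)/(x - 1)$. The decisive step is to pass to the cyclotomic field $\Q(\zeta_p)$ and factor $y^q = x^p - 1 = \prod_{i=0}^{p-1}(x - \zeta_p^i)$; since distinct factors are coprime away from the ramified prime above $p$, one finds that the ideal $(x - \zeta_p)$ is, up to that prime, a $q$-th power. Applying Stickelberger's theorem then yields explicit annihilators of the ideal class group of $\Q(\zeta_p)$, and combining these with the $q$-th-power structure produces an algebraic integer $\alpha \in \Z[\zeta_p]$ whose $q$-th power equals a prescribed Stickelberger twist of $x - \zeta_p$. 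From this one can extract Mih\u{a}ilescu's first theorem, the double Wieferich congruences $p^{q-1} \equiv 1 \pmod{q^2}$ and $q^{p-1} \equiv 1 \pmod{p^2}$, which already make any solution exceedingly rare.

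The main obstacle, and the genuine heart of the proof, is the final \emph{plus part} argument. Here I would work in the maximal real subfield $\Q(\zeta_p)^+$, use Thaine's theorem to control the relevant piece of the class group, and then bound the archimedean size of the $q$-th power constructed above. The contradiction comes from showing that this element must be a genuine $q$-th power of an algebraic integer while simultaneously being too small, in absolute value at each embedding, to be one unless $y = 0$; this rules out every pair of odd primes. Establishing this incompatibility is precisely the step that resisted proof for over a century, everything preceding it being essentially preparatory. Assembling the reduction with this final contradiction leaves only Euler's solution $9 - 8 = 1$, which is the assertion.
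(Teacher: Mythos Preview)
The paper does not prove this statement; it records it as a \emph{Fact} with a citation to Mih\u{a}ilescu's paper and uses it as a black box in the proof of Theorem~\ref{theorem:numbertheory}. Your proposal, by contrast, sketches the actual architecture of Mih\u{a}ilescu's argument: the classical reductions (Lebesgue, Ko~Chao, Euler) to odd prime exponents, Cassels' relations, the cyclotomic factorisation combined with Stickelberger, the double Wieferich congruences, and finally the plus-part contradiction via Thaine. As a high-level roadmap this is broadly faithful to the structure of the published proof, though your description of the final step is somewhat impressionistic---the genuine endgame is more intricate than a single size estimate, involving a delicate interplay between annihilators of the plus part of the class group and linear forms in logarithms (or, in later simplifications, a combinatorial argument on the group ring) that ultimately forces incompatible inequalities between $p$ and $q$. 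None of this detail is needed here: for the purposes of the present paper, citing the result is exactly what the authors do and all that is required.
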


\begin{fact}[{Pillai's conjecture for difference 2}, \cite{oeis:pillai}]\label{fact:pillai}
  The only non-prime prime powers $p^x$ and $q^y$ less than $10^{18}$
  with $p^x-q^y=2$ are $25$ and $27$.
\end{fact}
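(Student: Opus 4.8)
The plan is to treat Fact~\ref{fact:pillai} as a finite verification: because we restrict to prime powers below $10^{18}$, there are only finitely many candidate pairs, and the task is to carry out an exhaustive but provably complete search. To keep this search small enough to be auditable, I would first strip away the easy structural constraints on a hypothetical solution $p^x - q^y = 2$ with $x,y \ge 2$ and $p^x < 10^{18}$, and only then enumerate.

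First I would settle the parity. If $p = 2$ then $q^y = 2^x - 2 = 2(2^{x-1}-1)$; for $x \ge 2$ the factor $2^{x-1}-1$ is odd, so $q^y$ is exactly divisible by $2$, which is impossible for a prime power with exponent $\ge 2$. Hence $p$ is odd, and then $q^y = p^x - 2$ is odd, so $q$ is odd as well. Next I would rule out the case $x = y = 2$: here $p^2 - q^2 = (p-q)(p+q)$ with both factors even, so the difference is divisible by $4$ and cannot equal $2$. Consequently every solution has $\max\{x,y\} \ge 3$, i.e.\ at least one of the two prime powers is a perfect cube or higher power. Finally, since the bases are odd (hence $\ge 3$) and $3^{38} > 10^{18}$, all exponents are at most $37$, so the set of prime powers with exponent $\ge 3$ lying below $10^{18}$ is explicit and finite.

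The search then runs over this reduced set. Enumerate all $N = q^y$ with $y \ge 3$ and $N < 10^{18}$; these are dominated by the prime cubes (bases $q < 10^6$), giving on the order of $10^5$ values in total. For each such $N$ I would test whether $N + 2$ or $N - 2$ is itself a prime power with exponent $\ge 2$ --- a routine perfect-power test (trying each exponent $e$ with $2 \le e \le 37$ and checking the rounded $e$-th root) followed by a primality test on the resulting base, all on integers below $10^{18}$. Since $\max\{x,y\} \ge 3$, every solution is detected in this pass: if $x \ge 3$ the larger power is enumerated and its partner caught by the ``$N-2$'' test, and if $y \ge 3$ the smaller power is enumerated and its partner caught by the ``$N+2$'' test. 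The only hit is $N = 27 = 3^3$: here $27 - 2 = 25 = 5^2$ is a proper prime power, yielding the pair $(27,25)$, whereas $27 + 2 = 29$ is prime and therefore excluded by the hypothesis that both numbers are non-prime prime powers. This gives exactly the asserted conclusion.

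I expect the main obstacle to be not any single hard theorem but the bookkeeping needed to guarantee completeness and verifiability of the finite search. The decisive simplification is the difference-of-squares observation, which removes the overwhelming majority of candidates --- the $\sim 5\times 10^{7}$ pairs of prime squares below $10^{18}$ --- in one stroke and leaves only the $\sim 10^5$ cube-or-higher candidates, small enough to check explicitly or to confirm against the tabulated data referenced in~\cite{oeis:pillai}. Everything beyond the reductions is standard primality and perfect-power testing on $18$-digit integers, so the remaining difficulty is computational and organizational rather than conceptual.
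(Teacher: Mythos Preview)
Your argument is sound. The parity reduction (both bases odd), the difference-of-squares observation ruling out $x=y=2$, and the exponent bound from $3^{38}>10^{18}$ are all correct, and together they reduce the search to the $\sim 10^{5}$ prime powers with exponent $\ge 3$ below $10^{18}$, each of which needs only a perfect-power and primality check on its two neighbours $N\pm 2$. The completeness argument --- that $\max\{x,y\}\ge 3$ guarantees at least one side of every solution appears in the enumerated list --- is the key point, and you state it cleanly.

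By contrast, the paper does not prove this statement at all: it is recorded as a \emph{Fact} with a citation to tabulated data (OEIS), and is invoked as a black box in the proof of Theorem~\ref{theorem:numbertheory}. Your write-up therefore goes well beyond what the paper supplies, giving an auditable verification strategy rather than an appeal to an external table. The trade-off is that the paper only needs the result for $n<4{,}021{,}520$ (the range left over after Proposition~\ref{proposition:nthlargen}), so in context a much smaller search would already suffice; your reductions, however, make the full $10^{18}$ range tractable, which is what the cited source asserts.
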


Finally, we restate the desired theorem and give a complete proof. 
\begin{reptheorem}{theorem:numbertheory}
  Let $n>10$ be an integer and $P$ the maximal prime number with $P\le n$. 

  If $2 \le k\le n-P$ then there exists a prime number $p>2k$ which divides
  one of the numbers $n$, $n-1$, \ldots, $n-(k-1)$.
\end{reptheorem}

\begin{remark}\label{remark:dealwithpowersof2}
  It is immediately clear that $n$ has a prime divisor $p > 2$ ($k=1$)
  if and only if $n$ is not a power of $2$.

  It turns out that we need to work around the lack of other prime
  divisors in case $n=2^x$, cf.~Definition~\ref{definition:A}.
\end{remark}

\begin{proof}
  For $n\ge 4,021,520$, the assertion follows from
  Proposition~\ref{proposition:nthlargen}.

  For $n <4,021,520$ and $k\ge 5$, the assertion follows from
  Fact~\ref{fact:grimmsmalln}, cf.~Remark~\ref{remark:needsmalln}.

  We treat the cases $k=2$, $3$, $4$ separately.

  \textbf{Case $k=2$:} Note that $n$ and $n-1$ are composite numbers
  which have no common prime factors. Moreover, not both of them are
  prime powers by Fact~\ref{fact:catalan} which implies that $n(n-1)$
  has three pairwise distinct prime factors, one of which is
  necessarily at least $5 > 2\cdot 2$.
  
  \textbf{Case $k=3$:} We use a similar argument as above. If $n = 27$
  then $p = 13 > 2\cdot 3$ divides $n-1 = 26$. By
  Fact~\ref{fact:pillai}, we can therefore assume that at most one of
  the numbers $n$, $n-1$ and $n-2$ is a prime power and the other two
  are composite numbers with at least two distinct prime divisors
  each. Since $n$, $n-1$, and $n-2$ can have at most the prime factor
  $2$ in common, it follows that their product $n(n-1)(n-2)$ has at
  least four pairwise distinct prime divisors. In particular, one of
  these prime divisors is at least $7>2\cdot 3$.

  \textbf{Case $k=4$:} If $27 \in \{n,n-1\}$, then $p = 13> 2\cdot 4$
  divides $n-1$ or $n-2$.  We can therefore assume that either at most
  one of the numbers $n$, $n-1$, $n-2$, $n-3$ is a prime power or $n$
  and $n-3$ are the only prime powers among them
  (cf.~Fact~\ref{fact:pillai}).  First, we assume that at most one of
  the numbers $n$, $n-1$, $n-2$ and $n-3$ is a prime power and the
  other three are composite numbers with at least two distinct prime
  divisors each.  Since $n$, $n-1$, $n-2$ and $n-3$ can have at most
  the prime factors $2$ (occurring twice) and $3$ (occurring at most
  twice) in common, it follows that their product $n(n-1)(n-2)(n-3)$
  has at least five pairwise distinct prime divisors. In particular,
  one of these prime divisors is at least $11>2\cdot 4$. Finally, we
  assume that both $n$ and $n-3$ are prime powers and $n-1$ and $n-2$
  are composite numbers with at least two distinct prime divisors
  each. Note that $n$ and $n-3$ are prime powers of two distinct
  primes and both are not divisible by $3$. Therefore, $n$, $n-1$,
  $n-2$ and $n-3$ can have at most the prime factor $2$ (occurring
  twice) in common. Again, it follows that their product
  $n(n-1)(n-2)(n-3)$ has at least five pairwise distinct prime
  divisors, one of them being at least $11$.
\end{proof}


\section{\texorpdfstring{$\rank(\A_n)$}{rank(An)} and the \texorpdfstring{$p$}{p}-blocks}
\label{section:rank}
In this section we show that $\rank(\A_n) = n-1$ for all $n\ge
2$. Once this is shown, it follows from
Proposition~\ref{proposition:kernelA} that $\binom{x}{n}$ is
absolutely irreducible.

Our strategy is to show that the columns
$j\in \{0,1,\ldots, n-1\} \setminus\{n-P\}$ are linearly independent
where $P = \max\calP_n$ is the maximal prime number which is less than
or equal to $n$. To do so, we split these $n-1$ columns of $\A_n$ into
two groups, namely the ``outer'' $2(n-P)$ columns indexed with
$\{0,\ldots, n-P-1\}\cup \{P,P+1,\ldots, n-1\}$ and the ``inner''
$2P-n-1$ columns indexed with $\{n-P+1, \ldots, P-1\}$.  According to
the next lemma, $n-P< P-1$ which implies that we can always partition
the $n-1$ columns in this way.

\begin{lemma}\label{lemma:column-grouping}
  Let $n\ge 2$ be an integer and $P = \max\calP_n$ be the largest prime
  number which is less than or equal to $n$
  (cf.~Notation~\ref{notation:pr}).

  Then $P-1 > n-P$.
\end{lemma}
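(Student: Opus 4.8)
The plan is to rewrite the claimed inequality in a more convenient arithmetic form and then to deduce it from Bertrand's postulate (Fact~\ref{fact:bertrand}). Since $P$, $n$ are integers, $P-1>n-P$ is equivalent to $2P\ge n+2$, so it suffices to establish the latter. Note also that because $n\ge 2$, the prime $2$ lies in $\calP_n$, hence $P\ge 2$ and in particular $2P\ge 3$; this is exactly the hypothesis needed to apply Fact~\ref{fact:bertrand} to the integer $2P$.

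The first step is therefore to invoke Fact~\ref{fact:bertrand} with $2P$ in place of $n$, which yields a prime $q$ with $P<q<2P$. Since $q$ is an integer, the strict bound $q<2P$ gives $q\le 2P-1$. For the other side, the inequality $q>P$ together with the maximality of $P$ as the largest prime in $\calP_n$ (i.e.\ the largest prime $\le n$) forces $q>n$, and hence $q\ge n+1$ by integrality.

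Combining the two estimates on $q$ gives $n+1\le q\le 2P-1$, that is $n\le 2P-2$, which rearranges to $n-P\le P-2<P-1$, as desired. This proves the lemma, and in fact the slightly stronger statement $2P\ge n+2$.

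I do not anticipate a genuine obstacle: the argument is essentially a one-line consequence of Bertrand's postulate once it is phrased correctly. The only points that require a moment of care are verifying the hypothesis $2P\ge 3$ so that Fact~\ref{fact:bertrand} is applicable, and consistently using integrality to pass between strict and non-strict inequalities. Notably, no separate treatment of the cases ``$n$ prime'' and ``$n$ composite'' is necessary with this formulation.
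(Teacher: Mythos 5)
Your proof is correct and takes essentially the same route as the paper's: both apply Bertrand's postulate to obtain a prime $q$ with $P<q<2P$, use the maximality of $P$ to force $q>n$, and finish by integrality. (One minor remark: since all quantities are integers, $2P\ge n+2$ is equivalent to, not strictly stronger than, the stated inequality $P-1>n-P$.)
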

\begin{proof}
  According to Bertrand's postulate (see Fact~\ref{fact:bertrand}),
  there is a prime number $q$ with $P < q < 2P$. Since $P=\max\calP_n$
  is the maximal prime number at most $n$, it follows that
  $n < q < 2P$. Therefore, $n < 2P-1$ which is equivalent to
  $n-P < P-1$.
\end{proof}

Next, we introduce the $\Q$-spans of groups of columns of a matrix that
are of interest for our investigation. This is done slightly more
general as the two column groups of $\A_n$ we mentioned above. On the
one hand, we want to switch between the whole matrix $\A_n$ and
certain submatrices which we introduce below (the $p$-blocks).  On the
other hand, to show that the outermost $2(n-P)$ columns of $\A_n$ are
linearly independent, we use an inductive argument for which the
definition below is convenient.

\begin{definition}
  Let $n\ge 2$ and $\ell$ be positive integers and
  $C \in \Q^{\ell\times n}$ with columns $c_0$, \ldots,
  $c_{n-1}$. Further, let $P=\max\calP_n$ be the largest prime number
  less than or equal to $n$, cf. Notation~\ref{notation:pr}.
  \begin{enumerate}
  \item We denote by
    $\inspn(C) = \spn\{c_{n-P+1}, c_{n-P+2},\ldots, c_{P-1}\}$ be the
    span of the ``inner'' $2P-n-1$ columns of $C$.
  \item For $0 \le s < t \le n-P-1$, we denote by
    \begin{equation*}
      \outspn{s}{t}(C) = \spn\{c_j \mid s\le j \le t \text{ and } n-(t+1)\le j \le n-(s+1)\}
    \end{equation*}
    be the $\Q$-span of the $2(t-s+1)$ columns in the range between
    $s$ and $t$ in the left half of $C$ and in the range between $n-(t+1)$ and
    $n-(s+1)$ in the right half of $C$ (the same range of columns on the
    ``left'' and the ``right'' side of $C$).
  \end{enumerate}
  For a visualization of the generating columns of these spans, see
  Figure~\ref{fig:v-span}.
\end{definition}

\begin{figure}[h]
  \centering
  \begin{tikzpicture}[mymatrixenv]
  \matrix[mymatrix] (m)  {
 |[alias=b1o]|{}& {}             &{}&|[alias=mvb1o]|{}  &|[alias=vb1o]|{ }&{}               &|[alias=mclo]|{} &{}&{}               &|[alias=mo]|{}&|[alias=inspn]|{} &{}              &{}              &|[alias=mclro]|{}& |[alias=1b3o]|{ }             & {}             &{}& {}                & {}              & {}              &{}\\%
 {}             &  \\
 {}&|[scale=1.5]|{\hspace*{-0.2em}*}& &                   &                 &                 &                 &  &|[scale=1.5]|{\hspace*{0.55em} *}  & \hspace*{1.6em}&                &|[scale=1.5]|{\hspace*{3.1em} *\hspace*{-0.1em}}&                  &                                  & |[scale=1.5 ]|{} & &                & &   |[scale=1.5 ]|{\hspace*{0.5em}*\hspace*{-0.9em}} & \\
 {}             & \\                   
 |[alias=b1u]|{}&                &  &|[alias=mvb1u]|{  }&                 &                 & |[alias=vb1u]|{}&  &                &|[alias=mu]|{} &|[alias=inspnu]|{} &|[alias=jPu]|{} &                &|[alias=mclru]|{} &|[alias=np1u]|{}                  &                   & &|[alias=1b3u]|{}&                  &   &|[alias=b1ur]|{}  \\
 \\
   };
   %

    \mymatrixbracetop{1}{3}{\footnotesize $s$}
    \mymatrixbracetop{4}{7}{\footnotesize $t-s+1$}
    \mymatrixbracetop{8}{10}{\footnotesize $n-P-t-1$}
    \mymatrixbracetop{12}{14}{\footnotesize $n-P-t-1$}
    \mymatrixbracetop{15}{18}{\footnotesize $t-s+1$}
    \mymatrixbracetop{19}{21}{\footnotesize $s$}




    \node[below= of mvb1u] (j) {\footnotesize $j=s$};
    \draw[->, shorten >= 0.4cm] (j.north) -- (mvb1u.south);

    \node[below= of vb1u] (j2) {\footnotesize $j=t$};
    \draw[->, shorten >= 0.4cm] (j2.north) -- (vb1u.south);

    \node[below= of 1b3u] (rp1) {\footnotesize $j = n-(s+1)$};
    \draw[->, shorten >= 0.4cm] (rp1.north) -- (1b3u.south);

    \node[below= 1.5cm of np1u] (np1) {\footnotesize $j = n-(t+1)$};
    \draw[->, shorten >= 0.4cm] (np1.north) -- (np1u.south);

    \coordinate (ign) at ($(mo)+(+0.8em,0.45em)$);
    \coordinate (ignu) at ($(mu)+(+0.8em,-0.5em)$);
     \draw[red, thick, dashed] (ign) -- (ignu);
     \node[above= 1cm of ign] (nP) {\footnotesize $j = n-P$};
    \draw[->, shorten >= 0.4cm] (nP.south) -- (ign.north);

    \node[above right= 0.5cm and -0cm of inspn] (in) {\footnotesize $\inspn(C)$};

    \coordinate (Wlo) at ($(mo)+(+1.5em,0.7em)$);
    \coordinate (Wro) at ($(mo)+(+5em,0.7em)$);
    \coordinate (Wlu) at ($(mu)+(+1.5em, -0.7em)$);
    \coordinate (Wru) at ($(mu)+(+5em, -0.7em)$);
     \draw[fill=Gray, opacity=0.2] (Wlo)
    [snake=zigzag, segment length=10pt]-- (Wlu)
    [snake=zigzag, segment length=100pt]-- (Wru)
    [snake=zigzag, segment length=10pt]--(Wro)
    [snake=zigzag, segment length=100pt]--(Wlo);
    \draw[Gray, thick] ($(ign.north)+(1.2em,0)$) -- ($(ignu.south)+(1.2em,0)$);
    \draw[Gray, thick] ($(ign.north)+(1.7em,0)$) -- ($(ignu.south)+(1.7em,0)$);
    \draw[Gray, thick] ($(ign.north)+(2.2em,0)$) -- ($(ignu.south)+(2.2em,0)$);
    \draw[Gray, thick, dotted] ($(ign.north)+(2.6em,-0.75cm)$) -- ($(ign.north)+(3.2em, -0.75cm)$);
    \draw[Gray, thick] ($(ign.north)+(3.7em,0)$) -- ($(ignu.south)+(3.7em,0)$);

    \node[below right= 1.3cm  and -0.95cm of Wlu] (nP1) {\footnotesize $j = n-P+1$};
    \draw[->, shorten >= 0.15cm] (nP1.north) -- ($(Wlu)+(0.5em,0)$);

    \node[below right= 0.75cm  and -1.02cm of Wru] (n1) {\footnotesize $j = P-1$};
    \draw[->, shorten >= 0.15cm] (n1.north) -- ($(Wru)+(-0.5em,0)$);

    \draw[ForestGreen, thick] ($(mvb1o.north)+(0,0.4em)$) -- ($(mvb1u.south)+(0,-0.4em)$);
    \draw[ForestGreen, thick] ($(mvb1o.north)+(0.5em,0.4em)$) -- ($(mvb1u.south)+(0.5em,-0.4em)$);
    \draw[ForestGreen, thick] ($(mvb1o.north)+(1em,0.4em)$) -- ($(mvb1u.south)+(1em,-0.4em)$);
    \draw[ForestGreen, thick, dotted] ($(mvb1o.north)+(1.5em,-0.6cm)$) -- ($(mvb1o.north)+(3em, -0.6cm)$);
    \draw[ForestGreen, thick] ($(mclo.north)+(0,0.4em)$) -- ($(vb1u.south)+(0, -0.4em)$);

    \draw[ForestGreen, thick] ($(1b3o.north)+(0,0.4em)$) -- ($(np1u.south)+(0,-0.4em)$);
    \draw[ForestGreen, thick] ($(1b3o.north)+(0.5em,0.4em)$) -- ($(np1u.south)+(0.5em,-0.4em)$);
    \draw[ForestGreen, thick] ($(1b3o.north)+(1em,0.4em)$) -- ($(np1u.south)+(1em,-0.4em)$);

    \draw[ForestGreen, thick, dotted] ($(1b3o.north)+(1.5em,-0.6cm)$) -- ($(1b3o.north)+(3em, -0.6cm)$);
    \draw[ForestGreen, thick] ($(1b3o.north)+(3.35em,0.4em)$) -- ($(np1u.south)+(3.35em, -0.4em)$);

    \draw[ForestGreen, rounded corners] ($(mvb1o.north)+(-0.4em,0.8em)$) rectangle ($(vb1u)+(0.4em,-0.7em)$);
    \draw[ForestGreen, rounded corners] ( $(1b3o.north)+(-0.4em,0.8em)$) rectangle ( $(1b3u)+(0.4em,-0.7em)$);

\end{tikzpicture}

  \caption{$\inspn(C)$ is spanned by the columns in the (grey)
    zigzag-area; $\outspn{s}{t}(C)$ is spanned by the columns in the two (green)
    rectangles on the left and right.}
  \label{fig:v-span}
\end{figure}

\begin{remark}\label{remark:directsum}
  Let $n\in \N$ and $P = \max\calP_n$,
  cf.~Notation~\ref{notation:pr}. 

  Our goal is to show that
  \begin{enumerate}[(i)]
  \item $\dim\outspn{0}{n-P-1}(\A_n) = 2(n-P)$, 
  \item $\dim\inspn(\A_n) = 2P-n-1$ and
  \item $\inspn(\A_n) \cap \outspn{0}{n-P-1}(\A_n) = \mathbf{0}$.
  \end{enumerate}
  Together this then implies that the sum
  $\inspn(\A_n) + \outspn{0}{n-P-1}(\A_n)$ is direct and
  \begin{equation*}
    \dim\left(\inspn(\A_n) \oplus \outspn{0}{n-P-1}(\A_n)\right) = 2(n-P) + (2P-n-1) = n-1.
  \end{equation*}
  Since $n > \rank(\A_n)$ by Remark~\ref{remark:dimker>0}, it further follows that 
  \begin{equation*}
    n > \rank(\A_n) \ge \dim\left(\inspn(\A_n) \oplus \outspn{0}{n-P-1}(\A_n)\right) = n-1
  \end{equation*}
  and hence
  \begin{equation*}
    \rank(\A_n) = n-1.
  \end{equation*}
  Then, by Proposition~\ref{proposition:kernelA}, $\binom{x}{n}$ is
  absolutely irreducible.

  Note that Assertion~(i) is shown in
  Proposition~\ref{proposition:outerdim} and Assertions~(ii) and~(iii)
  are proven in Corollary~\ref{corollary:innerdim} below.
\end{remark}

For our further investigation, it turns out to be useful to split the
valuation matrix $\A_n$ into blocks of rows, each block corresponding
to a prime number $p\in \calP_n$.

\subsection{The structure of a \texorpdfstring{$p$}{p}-block}
\label{subsection:p-block-structure}

\begin{definition}\label{definition:pblock}
  For $n\in \N$ and $p\in\calP_n$, we define the \emph{$p$-block}
  $\pblock{n}{p}$ as the $|\calR_{n,p}| \times n$ integer matrix
  defined by
  \begin{equation*}
    \pblock{n}{p} = \left(\val_{p}(n+r-j)-\val_p(n-j)\right)_{\substack{r\in \calR_{n,p}\\ 0\le j\le n-1}}
  \end{equation*}
\end{definition}

For our purposes, we focus on the distribution of zero and non-zero
entries of leftmost $p$ and rightmost $p-1$ columns of the
$p$-blocks. In this sense $\pblock{n}{p}$ has a ``structure'' which is
described in Propositions~\ref{proposition:lefthalfpblock}
and~\ref{proposition:righthalfpblock}.

\begin{remark}\label{remark:exceptions}
  Note that in the exceptional cases where
  $\calR_{n.p} \neq \{r \mid 1\le r \le p-\rempn-1\}$,
  Propositions~\ref{proposition:lefthalfpblock}
  and~\ref{proposition:righthalfpblock} only give information about
  the first $p-\rempn-1$ rows of the corresponding
  $p$-blocks. However, in all situations where we apply the two
  propositions below, we can rule out these exceptional cases. 
\end{remark}

\begin{proposition}\label{proposition:lefthalfpblock}
  Let $n\in\N$, $p \in \calP_n$ (cf.~Notation~\ref{notation:pr}) and
  $\pblock{n}{p} = (b_{r,j})$ the corresponding $p$-block.

  For $0 \le j \le p-1$ and $1 \le r\le p-\rempn-1$ the
  following holds:
  \begin{equation*}
    b_{r,j} =
    \begin{cases}
      -\val_p(n-\rempn) & j = \rempn \\
       \val_p(n-\rempn) & j = r + \rempn \\
       0 & \text{else}.
    \end{cases}
  \end{equation*}
  where, as usual, $0 \le \rempn < p$ with $n \equiv \rempn \mod p$.
\end{proposition}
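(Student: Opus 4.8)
The plan is to fix a row index $r$ with $1 \le r \le p-\rempn-1$ and to read off, column by column, the two $p$-adic valuations that make up the entry $b_{r,j} = \val_p(n+r-j) - \val_p(n-j)$ for $0 \le j \le p-1$. Throughout, write $n = qp + \rempn$ with $0 \le \rempn \le p-1$; since $p \in \calP_n$ we have $p \le n$, so every number $n - j$ and $n + r - j$ occurring here is a positive integer and all valuations are well defined. The first observation, which disposes of the generic column, is that $n+r-j$ and $n-j$ differ by $r$, and $1 \le r \le p-\rempn-1 \le p-1$, so $p$ cannot divide both of them; hence for each $j$ at most one of the two valuations is nonzero.

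Next I would locate the two exceptional columns. From $n - j \equiv \rempn - j \pmod p$ we see that $p \mid n-j$ holds, for $j$ in the range $\{0,\ldots,p-1\}$, exactly when $j = \rempn$, and then $n - j = qp$, so $\val_p(n-j) = \val_p(qp) = \val_p(n-\rempn)$. Likewise $n + r - j \equiv \rempn + r - j \pmod p$, and the hypothesis $1 \le r \le p - \rempn - 1$ guarantees $\rempn + r \in \{1,\ldots,p-1\}$; hence $p \mid n+r-j$ holds, for $j \in \{0,\ldots,p-1\}$, exactly when $j = \rempn + r$, and then $n+r-j = n - \rempn = qp$, so $\val_p(n+r-j) = \val_p(n-\rempn)$. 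Because $r \ge 1$, the two special columns $j = \rempn$ and $j = \rempn + r$ are distinct.

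Putting the pieces together yields the three cases of the formula: for $j \notin \{\rempn, \rempn+r\}$ both valuations vanish, so $b_{r,j} = 0$; for $j = \rempn$ the second valuation vanishes and the first equals $\val_p(n-\rempn)$, so $b_{r,\rempn} = -\val_p(n-\rempn)$; and for $j = \rempn + r$ the first valuation vanishes and the second equals $\val_p(n-\rempn)$, so $b_{r,\rempn+r} = \val_p(n-\rempn)$. There is no real obstacle in this argument — it is a direct divisibility computation; the only thing to keep an eye on is that the two exceptional indices $\rempn$ and $\rempn + r$ actually fall inside $\{0,\ldots,p-1\}$ (this is precisely where the upper bound $r \le p-\rempn-1$ enters) and that they are distinct, so that the negative and the positive contributions never land in the same entry.
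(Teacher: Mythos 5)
Your proposal is correct and follows essentially the same route as the paper: both arguments identify $j=\rempn$ as the unique column in $\{0,\ldots,p-1\}$ where $\val_p(n-j)\neq 0$ and $j=\rempn+r$ as the unique one where $\val_p(n+r-j)\neq 0$, using the bound $1\le r\le p-\rempn-1$ to keep $\rempn+r$ in range, and then read off the three cases. Your additional observations that the two numbers differ by $r<p$ (so at most one valuation is nonzero per column) and that both nonzero valuations equal $\val_p(n-\rempn)$ because the relevant integer is $n-\rempn=qp$ in both cases match the paper's computation exactly.
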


A visualization of the statement in
Proposition~\ref{proposition:lefthalfpblock} can be found in
Figure~\ref{fig:pblock-left}.

\begin{figure}[h]
  \centering
    \begin{tikzpicture}[mymatrixenv]
  \matrix[mymatrix] (m)  {
 |[alias=b1o]|{}& {}             &{}&|[alias=mvb1o]|{-v}&|[alias=vb1o]|{v}&{}               & {}&|[alias=mclo]|{} &{}&{}               &  & \\
 {}             &  \\                                                                                         
 {}             &|[scale=1.5]|{0}&  &                   &                 & |[scale=1.5 ]|{}& {}&                 &  &|[scale=1.5]|{*} &  & \\
 {}             & \\                                                                                                             
 |[alias=b1u]|{}&                &  &|[alias=mvb1u]|{-v}&                 &                 & {}& |[alias=vb1u]|v &  &                 &  & \\
 \\                                                                                         
   };
   %
    \mymatrixbraceleft{1}{5}{\footnotesize $p-\rempn-1$}

    \mymatrixbracetop{1}{3}{\footnotesize $\rempn$}
    \node[above= of mvb1o] (j) {\footnotesize $j=\rempn$};
    \draw[->, shorten >= 0.3cm] (j.south) -- (mvb1o.north);

    \node[above=0.8cm of mclo] (p1) {\footnotesize $j = p-1$};
    \draw[->, shorten >= 0.5cm] (p1.south) -- (mclo.north);







    \draw[dotted,shorten <=0.1cm,shorten >=0.1cm] (mvb1o.south) -- (mvb1u.north);
    \draw[dotted,shorten <=0.1cm,shorten >=0.1cm] (vb1o.south east) -- (vb1u.north west);


    \draw[ForestGreen, rounded corners] ($(b1o.north)+(-0.4em,0.8em)$) rectangle ($(vb1u)+(0.4em,-0.7em)$);

\end{tikzpicture}

    \caption{Leftmost $p$ columns of (the first $p-\rempn-1$ rows of) $\pblock{n}{p}$ where $v = \val_p(n-\rempn)$,
      cf.~Proposition~\ref{proposition:lefthalfpblock}}
  \label{fig:pblock-left}
\end{figure}

\begin{proof}
  Let $0\le j \le p-1$.  In this range of $j$ it follows from the
  definition of $\rempn$ that
  \begin{equation*}
    \val_p(n-j) \neq 0 \;\Longleftrightarrow\; j = \rempn
  \end{equation*}
  holds.  Moreover, $\val_p(n + r -j) \neq 0$ if and only if
  $p \mid \rempn+r -j$, or equivalently, $j \equiv \rempn+r \mod p$. However,
  since $1\le r \le p-\rempn-1$, it follows that
  $1 + \rempn\le r + \rempn \le p-1$ and therefore $\val_p(n+r-j)\neq 0$ if
  and only if $j = \rempn +r$.
  
  Therefore, all entries of the first $\rempn$ columns of $\pblock{n}{p}$ are
  zero and the entries of the $\rempn+1$-st column equal
  $-\val_p(n-\rempn) \neq 0$ and the remaining columns
  $\rempn+1 \le j \le p-1$ are zero except for the entry
  in row $r = j-\rempn$ which amounts to $\val_p(n-\rempn)$. The assertion
  follows.
\end{proof}

Proposition~\ref{proposition:lefthalfpblock} describes the zero and
non-zero entries of the leftmost $p$ columns of the $p$-block
$\pblock{n}{p}$. Next, we have a closer look at the rightmost $p-1$
columns.

\begin{proposition}\label{proposition:righthalfpblock}
  Let $n\in\N$, $p \in \calP_n$ (cf.~Notation~\ref{notation:pr}) and
  $\pblock{n}{p} = (b_{r,j})$ the corresponding $p$-block.

  For $n-(p-1)\le j \le n-1$ and $1 \le r\in p-\rempn-1$ the
  following holds:
  \begin{equation*}
    b_{r,j} =
    \begin{cases}
      1 & j = n-p+r \\
      0 & \text{else}.
    \end{cases}
  \end{equation*}
\end{proposition}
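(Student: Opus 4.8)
The plan is to mirror the argument used for the left half (Proposition~\ref{proposition:lefthalfpblock}), but now tracking $p$-divisibility of the arguments $n+r-j$ and $n-j$ as $j$ ranges over the rightmost $p-1$ columns, i.e.\ $n-(p-1) \le j \le n-1$. First I would determine when $\val_p(n-j) \neq 0$ in this range: writing $n = qp + \rempn$, the differences $n-j$ run through $1, 2, \ldots, p-1$, none of which is divisible by $p$, so $\val_p(n-j) = 0$ for every $j$ with $n-(p-1) \le j \le n-1$. Hence the second summand $-\val_p(n-j)$ in the definition of $b_{r,j}$ vanishes identically on these columns, and $b_{r,j} = \val_p(n+r-j)$ throughout.

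Next I would analyze the first summand. For the relevant rows $1 \le r \le p-\rempn-1$ and columns in this range, the quantity $n+r-j$ runs through $r+1, r+2, \ldots, r+(p-1)$, a block of $p-1$ consecutive integers. Exactly one of these is divisible by $p$, namely $n+r-j = (q+1)p$, which happens precisely when $j = n + r - (q+1)p = \rempn + r - p + n - \rempn - \ldots$; cleaning this up, since $n = qp+\rempn$ we get $n+r-j = (q+1)p \iff j = n+r-(q+1)p = qp+\rempn + r - (q+1)p = \rempn + r - p$. That value of $j$ lies in $[n-p+1, n-1]$? Here I must be careful: $j = n-p+r$ does lie in the prescribed range since $1 \le r \le p-\rempn-1 \le p-1$ gives $n-p+1 \le j \le n-2 < n-1$. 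So the unique nonzero entry in row $r$ sits in column $j = n-p+r$. It remains to compute its value: $\val_p(n+r-j) = \val_p((q+1)p) = 1 + \val_p(q+1)$, and I would need to argue this equals $1$. This holds because $(q+1)p = n+r-j \le n+r \le n + (p-\rempn-1) = qp + p - 1 < (q+1)p + p$, forcing $q+1 < p+1$, wait — more simply, the integers $n+r-j$ for $j$ in our range are at most $r + (p-1) \le (p-\rempn-1)+(p-1) < 2p \le p^2$ (using $p \ge 2$), and the multiple of $p$ among them, being positive and less than $2p$ when $\rempn \ge 1$ — but when $\rempn = 0$ the bound $r + (p-1) \le p-2+p-1 = 2p-3 < 2p$ still gives a multiple of $p$ strictly between $0$ and $2p$, hence exactly $p$ itself unless that multiple is $p$; in all cases the unique multiple of $p$ in the block equals $p$ times a number $< 2$, i.e.\ equals $p$, so its $p$-valuation is $1$.

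The main obstacle I anticipate is the bookkeeping in the last step: one must verify that the unique multiple of $p$ appearing among the $p-1$ consecutive values $\{n+r-j : n-(p-1)\le j\le n-1\}$ is $p$ itself (valuation exactly $1$) rather than $2p$, $3p$, or a higher multiple (which would have larger valuation or could equal $(q+1)p$ with $q+1$ itself divisible by $p$). The cleanest way to nail this is to observe that $2 \le n+r-j$ (since $n-j \ge 1$ and $r \ge 1$) and $n+r-j \le r + (p-1) \le (p-\rempn-1) + (p-1) \le 2p-2 < 2p$, so the only positive multiple of $p$ in the interval $[2, 2p-2]$ is $p$ itself. This pins the nonzero entry to the value $1$ and completes the proof. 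As in Proposition~\ref{proposition:lefthalfpblock}, I would remark that this covers the generic case $\calR_{n,p} = \{r \mid 1 \le r \le p-\rempn-1\}$ and, in the exceptional cases, the stated formula describes the first $p-\rempn-1$ rows, which is all that is needed later (cf.~Remark~\ref{remark:exceptions}).
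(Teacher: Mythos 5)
Your proof is correct and follows essentially the same route as the paper: $\val_p(n-j)=0$ because $1\le n-j\le p-1$, and the unique multiple of $p$ among the values $n+r-j\in[2,2p-2]$ is $p$ itself, which places the single entry $1$ in column $j=n-p+r$. The brief detour identifying that multiple as $(q+1)p$ (giving $j=\rempn+r-p$, which is not even in range) is a slip, but you correct it yourself in the final paragraph, so the argument stands.
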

A visualization of the statement in
Proposition~\ref{proposition:righthalfpblock} can be found in
Figure~\ref{fig:pblock-right}.

\begin{figure}[h]
  \centering
    \begin{tikzpicture}[mymatrixenv]
  \matrix[mymatrix] (m)  {
    &{}              &{}              &|[alias=mclro]|{} & |[alias=1b3o]|{1} & {}               &{}&|[alias=1b3or]|{} &{} &                {}&{}     \\
    \\
    &                &|[scale=1.5]|{*}&                  &                   & |[scale=1.5 ]|{} &  &                  &   &|[scale=1.5 ]|{0} & \\
    \\
    &|[alias=jPu]|{} &                &|[alias=mclru]|{} &|[alias=np1u]|{}   &                  &  &|[alias=1b3u]|1   &   &                  &|[alias=b1ur]|{}  \\
 \\
   };
   %
    \mymatrixbraceleft{1}{5}{\footnotesize $p-\rempn-1$}

    \node[above= 0.6cm of 1b3o] (j) {\footnotesize $j=n-(p-1)$};
    \draw[->, shorten >= 0.3cm] (j.south) -- (1b3o.north);

    \node[above=1.3cm of 1b3or] (p1) {\footnotesize $j = n-(\rempn+1)$};
    \draw[->, shorten >= 0.5cm] (p1.south) -- (1b3or.north);


    \mymatrixbracetop{9}{11}{\footnotesize $\rempn$}






    \draw[dotted,shorten <=0.1cm,shorten >=0.1cm] (1b3o.south east) -- (1b3u.north west);

    \draw[ForestGreen, rounded corners] ($(1b3o.north)+(-0.4em,0.4em)$) rectangle ($(b1ur)+(0.4em,-0.5em)$);

\end{tikzpicture}

    \caption{Rightmost $p-1$ columns of (the first $p-\rempn-1$ rows of) $\pblock{n}{p}$,
      cf.~Proposition~\ref{proposition:righthalfpblock}}
  \label{fig:pblock-right}
\end{figure}

\begin{proof}
  For $n-(p-1)\le j \le n-1$, it follows that $1 \le n-j \le p-1$ and
  therefore $\val_{p}(n-j) = 0$.  Moreover, $\val_p(n+r-j) \neq 0$ is
  equivalent to $p\mid n-j+r$. However, given the ranges of $n-j$ and
  $r$, this is the case if and only if $n-j+r = p$ in which case
  $\val_p(n+r-j) = 1$. The assertion follows.
\end{proof}

\subsection{The inner columns of \texorpdfstring{$\A_n$}{An}}
\label{subsection:inner}
In this section we prove the results concerning the ``inner'' columns
of $\A_n$, namely we show that the Goals~(ii) and~(iii) formulated in
Remark~\ref{remark:directsum} hold, see
Corollary~\ref{corollary:innerdim}.  Moreover, this leads to the
special case of Theorem~\ref{theorem:binomialpolynomials} where $n=P$
is a prime number, see Corollary~\ref{corollary:n=P} below.

In this section we exploit the structure of the $P$-block
$\pblock{n}{P}$ where $P=\max\calP_n$ is the maximal prime number
which is less than or equal to $n$.

\begin{remark}\label{remark:Pnoexception}
  Let $n\ge 2$ and $P=\max\calP_n$, cf.~Notation~\ref{notation:pr}. If
  $n = 2^s$ with $s\ge 2$, then $P>2$ and if $n=9$, then $P=7>3$. Therefore, 
  $\calR_{n,P} = \{r \mid 1\le r\le P-r_{n,P}-1\}$. 

  Hence, the $P$-block $\pblock{n}{P}$ always consists of
  $P-r_{n,P}-1$ rows and Propositions~\ref{proposition:lefthalfpblock}
  and~\ref{proposition:righthalfpblock} refer to the whole $P$-block,
  cf.~Remark~\ref{remark:exceptions}.
\end{remark}

\begin{proposition}\label{proposition:middle-entries}
  Let $n\ge 2$ be an integer and $P= \max\calP_n$ be the largest prime
  number which is less than or equal to $n$. 
  
  Then
  \begin{enumerate}
  \item $\dim\inspn(\pblock{n}{P}) = 2P-n-1$,
  \item $\outspn{0}{n-P-1}(\pblock{n}{P}) = \mathbf{0}$.
  \end{enumerate}
\end{proposition}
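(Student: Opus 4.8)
The plan is to read off both claims directly from the explicit description of the columns of the $P$-block given by Propositions~\ref{proposition:lefthalfpblock} and~\ref{proposition:righthalfpblock}. First I would fix the parameters attached to $P=\max\calP_n$: since $P\in\calP_n$ we have $P\le n$, and Lemma~\ref{lemma:column-grouping} gives $n-P<P-1$, so $P\le n<2P-1<2P$. Consequently the residue of $n$ modulo $P$ is $r_{n,P}=n-P$, and hence $\calR_{n,P}=\{r\mid 1\le r\le P-r_{n,P}-1\}=\{1,\ldots,2P-n-1\}$, a non-empty set. By Remark~\ref{remark:Pnoexception} we are never in an exceptional case, so $\pblock{n}{P}$ is a genuine $(2P-n-1)\times n$ matrix to which Propositions~\ref{proposition:lefthalfpblock} and~\ref{proposition:righthalfpblock} apply to all of its rows; note also $\val_P(n-r_{n,P})=\val_P(P)=1$.

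Next I would specialize the two structure propositions to $p=P$. Proposition~\ref{proposition:lefthalfpblock} then says that, among the leftmost $P$ columns of $\pblock{n}{P}$, columns $0,\ldots,n-P-1$ are zero, column $n-P$ equals $(-1,\ldots,-1)^t$, and for each $1\le r\le 2P-n-1$ column $n-P+r$ is the standard unit vector $e_r\in\Q^{2P-n-1}$; Proposition~\ref{proposition:righthalfpblock} says that, among the rightmost $P-1$ columns (indices $n-P+1,\ldots,n-1$), column $n-P+r$ is again $e_r$ for $1\le r\le 2P-n-1$, while columns with index $P,\ldots,n-1$ are zero. In particular the ``inner'' columns $c_{n-P+1},\ldots,c_{P-1}$ of $\pblock{n}{P}$ are exactly $e_1,\ldots,e_{2P-n-1}$, so they form a basis of $\Q^{2P-n-1}$ and $\dim\inspn(\pblock{n}{P})=2P-n-1$, which is assertion~(1). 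For assertion~(2), $\outspn{0}{n-P-1}(\pblock{n}{P})$ is spanned by the columns $c_0,\ldots,c_{n-P-1}$ together with $c_P,\ldots,c_{n-1}$ of $\pblock{n}{P}$: the former are zero by Proposition~\ref{proposition:lefthalfpblock} because their index is strictly smaller than $r_{n,P}=n-P$, and the latter are zero by Proposition~\ref{proposition:righthalfpblock} because their index strictly exceeds $P-1$ (when $n=P$ both of these column ranges are empty and the span is $\mathbf{0}$ trivially). Hence $\outspn{0}{n-P-1}(\pblock{n}{P})=\mathbf{0}$.

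There is no genuine obstacle here beyond index bookkeeping; the one point requiring care is the accounting of the column ranges. One must check that the two ranges $0\le j\le P-1$ and $n-(P-1)\le j\le n-1$ occurring in the two propositions overlap precisely in the inner index set $\{n-P+1,\ldots,P-1\}$ — which is exactly where $n<2P-1$ from Lemma~\ref{lemma:column-grouping} is used — that the left outer block of $n-P$ columns lies strictly below index $r_{n,P}=n-P$, and that the right outer block of $n-P$ columns lies strictly above index $P-1$. The exceptional-$\calR_{n,p}$ caveat flagged in Remark~\ref{remark:exceptions} is precisely what Remark~\ref{remark:Pnoexception} rules out for the prime $P$.
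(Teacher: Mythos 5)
Your proof is correct and follows essentially the same route as the paper: both reduce the claim to the explicit column description of $\pblock{n}{P}$ from Propositions~\ref{proposition:lefthalfpblock} and~\ref{proposition:righthalfpblock}, using $r_{n,P}=n-P$ and $\val_P(P)=1$ to see that the outer columns vanish and the inner columns form an identity block. The index bookkeeping you carry out (including the non-exceptionality of $\calR_{n,P}$ via Remark~\ref{remark:Pnoexception}) matches the paper's argument.
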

\begin{proof} 
  By Lemma~\ref{lemma:column-grouping}, $n-P < P-1$ holds.  In
  particular, it follows that $r_{n,P} = n-P$. Moreover,
  $\calR_{n,P} = \{1,2,\ldots, 2P-n-1\}$,
  cf.~Remark~\ref{remark:Pnoexception}.  By
  Propositions~\ref{proposition:lefthalfpblock}
  and~\ref{proposition:righthalfpblock} the $P$-block $\pblock{n}{P}$
  is of the following form, see also Figure~\ref{fig:Pblock}:
  \begin{enumerate}[(a)]
  \item All columns indexed with
    $j\in \{0,\ldots, n-P-1\} \cup \{P,\ldots, n-1\}$ are zero columns.
  \item Each entry of the column $j = r_{n,P} = n-P$ is equal $-1$.
  \item Each column indexed with $n-P+1 \le j \le P-1$ contains
    exactly one non-zero entry, namely, the entry in row $r = j+P-n$
    is equal $1$.
  \end{enumerate}
  \begin{figure}[h]
  \centering
    \begin{tikzpicture}[mymatrixenv]
  \matrix[mymatrix] (m)  {
 |[alias=b1o]|{}& {}             &{}&|[alias=mvb1o]|{-1}&|[alias=vb1o]|{1}&{}               & {}&|[alias=mclo]|{} &{}&{}               &{} \\
 {}             &  \\                                                                                         
 {}             &|[scale=1.5]|{0}&  &                   &                 & |[scale=1.5 ]|{}& {}&                 &  &|[scale=1.5]|{0} & \\
 {}             & \\                                                                                                             
 |[alias=b1u]|{}&                &  &|[alias=mvb1u]|{-1}&                 &                 & {}& |[alias=vb1u]|1 &  &                 & \\
 \\                                                                                         
   };
   %
    \mymatrixbraceleft{1}{5}{\footnotesize $2P-n-1$}

    \mymatrixbracetop{1}{3}{\footnotesize $n-P$}
    \mymatrixbracetop{9}{11}{\footnotesize $n-P$}
    \node[above= of mvb1o] (j) {\footnotesize $j=n-P$};
    \draw[->, shorten >= 0.3cm] (j.south) -- (mvb1o.north);

    \node[above=1cm of mclo] (p1) {\footnotesize $j = P-1$};
    \draw[->, shorten >= 0.55cm] (p1.south) -- (mclo.north);







    \draw[dotted,shorten <=0.1cm,shorten >=0.1cm] (mvb1o.south) -- (mvb1u.north);
    \draw[dotted,shorten <=0.1cm,shorten >=0.1cm] (vb1o.south east) -- (vb1u.north west);


    \draw[ForestGreen, rounded corners] ($(mvb1o.north)+(-0.8em,0.5em)$) rectangle ($(vb1u)+(0.4em,-0.7em)$);

\end{tikzpicture}

    \caption{$\pblock{n}{P}$}
  \label{fig:Pblock}
\end{figure}

  It immediately follows from (a), that
  $\outspn{0}{n-P-1}(\pblock{n}{P}) = \mathbf{0}$. Moreover, as
  $\inspn(\pblock{n}{P})$ is spanned by ${2P-n-1}$ vectors, the
  dimension is bounded by $2P-n-1$.  By (c), the submatrix of
  $\pblock{n}{P}$ consisting of columns $n-P+1 \le j \le P-1$ is the
  identity matrix of dimension $P-1 - (n-P+1)+1 = 2P-n-1$. As all
  these columns are elements of $\inspn(\pblock{n}{P})$ it follows
  that $\dim\inspn(\pblock{n}{P}) = 2P-n-1$.
\end{proof}

\begin{corollary}\label{corollary:innerdim}
  Let $n\ge 2$ be an integer and $P = \max\calP_n$ be the largest
  prime number less than or equal to $n$,
  cf.~Notation~\ref{notation:pr}.

  Then
  \begin{enumerate}[(i)]
  \item $\dim\inspn(\A_n) = 2P-n-1$ and
  \item $\outspn{0}{n-P-1}(\A_n) \cap \inspn(\A_n) = \mathbf{0}$.
  \end{enumerate}
\end{corollary}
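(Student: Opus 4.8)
The plan is to obtain both assertions from Proposition~\ref{proposition:middle-entries} by exploiting that the $P$-block $\pblock{n}{P}$ is a block of consecutive rows of $\A_n$: by Definitions~\ref{definition:A} and~\ref{definition:pblock}, the matrix $\A_n$ is the vertical stacking of the blocks $\pblock{n}{p}$, $p\in\calP_n$. Let $N$ be the number of rows of $\A_n$ and let $\pi\colon\Q^{N}\to\Q^{|\calR_{n,P}|}$ be the coordinate projection onto the rows of $\A_n$ that constitute $\pblock{n}{P}$. The key observation is that $\pi$ sends the $j$-th column of $\A_n$ to the $j$-th column of $\pblock{n}{P}$ for every $j$; consequently $\pi$ maps $\inspn(\A_n)$ onto $\inspn(\pblock{n}{P})$ and $\outspn{0}{n-P-1}(\A_n)$ onto $\outspn{0}{n-P-1}(\pblock{n}{P})$.

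For part~(i) I would argue by squeezing. On one hand $\inspn(\A_n)$ is, by definition, spanned by the $2P-n-1$ columns indexed $n-P+1,\dots,P-1$ (a nonempty range by Lemma~\ref{lemma:column-grouping}), so $\dim\inspn(\A_n)\le 2P-n-1$. On the other hand, a linear map does not increase dimension, so $\dim\inspn(\A_n)\ge\dim\pi\bigl(\inspn(\A_n)\bigr)=\dim\inspn(\pblock{n}{P})=2P-n-1$ by Proposition~\ref{proposition:middle-entries}(1). Hence $\dim\inspn(\A_n)=2P-n-1$.

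For part~(ii) I would take an arbitrary $v\in\outspn{0}{n-P-1}(\A_n)\cap\inspn(\A_n)$ and apply $\pi$. Since $v\in\outspn{0}{n-P-1}(\A_n)$ we get $\pi(v)\in\outspn{0}{n-P-1}(\pblock{n}{P})=\mathbf{0}$ by Proposition~\ref{proposition:middle-entries}(2), i.e.\ $\pi(v)=0$. Since $v\in\inspn(\A_n)$ we may write $v=\sum_{j=n-P+1}^{P-1}\lambda_j c_j$ with $c_j$ the $j$-th column of $\A_n$ and $\lambda_j\in\Q$; applying $\pi$ yields $0=\sum_{j=n-P+1}^{P-1}\lambda_j\,\pi(c_j)$. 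But, by item~(c) in the proof of Proposition~\ref{proposition:middle-entries}, the columns $\pi(c_{n-P+1}),\dots,\pi(c_{P-1})$ of $\pblock{n}{P}$ form an identity matrix of size $2P-n-1$ and are therefore linearly independent, which forces all $\lambda_j=0$ and hence $v=0$.

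I do not expect a genuine obstacle here: all of the substance has already been done in Proposition~\ref{proposition:middle-entries}, and the corollary merely promotes those facts from the single block $\pblock{n}{P}$ to the full matrix $\A_n$ via the monotonicity of dimension under the projection $\pi$. The only point needing a line of care is the verification that $\pi$ indeed restricts to surjections onto the named column spans, which is immediate once one notes that $\pblock{n}{P}$ is a row-block of $\A_n$ and that $\pi$ commutes with selecting a column.
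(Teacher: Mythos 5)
Your proposal is correct and follows essentially the same route as the paper's own proof: both parts are deduced from Proposition~\ref{proposition:middle-entries} by means of the coordinate projection onto the rows of the $P$-block, squeezing the dimension in part~(i) and using the vanishing of $\outspn{0}{n-P-1}(\pblock{n}{P})$ together with the linear independence of the inner columns of $\pblock{n}{P}$ in part~(ii). No substantive differences.
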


Note that the assertions in Corollary~\ref{corollary:innerdim} are
exactly the Goals~(ii) and~(iii) of Remark~\ref{remark:directsum}.

\begin{proof}
  Since $\inspn(\A_n)$ is spanned by $2P-n-1$ vectors,
  $\dim\inspn(\A_n) \le 2P-n-1$ holds. Moreover, as the projection
  $\pi: \inspn(\A_n) \to \inspn(\pblock{n}{P})$ is an epimorphism, it
  follows that $\dim\inspn(\A_n) = 2P-n-1$ by
  Proposition~\ref{proposition:middle-entries}.
  
  For the second assertion, let
  $v \in \outspn{0}{n-P-1}(\A_n) \cap \inspn(\A_n)$. Let $a_{n-P+1}$,
  \ldots, $a_{P-1}$ denote the columns of $\A_n$ which span
  $\inspn(\A_n)$. Then there exist $\lambda_{n-P+1}$, \ldots,
  $\lambda_{P-1} \in \Q$ such that
  \begin{equation}\label{eq:Pblock-1}
    v = \sum_{j=n-P+1}^{P-1}\lambda_j a_j \in \outspn{0}{n-P-1}(\A_n) \cap \inspn(\A_n).
  \end{equation}
  Then
  \begin{equation}\label{eq:Pblock-2}
    \pi(v) = \sum_{j=n-P+1}^{P-1}\lambda_j b_j^{(P)} \in \outspn{0}{n-P-1}(\pblock{n}{P}) \cap \inspn(\pblock{n}{P})
  \end{equation}
  where $b_j^{(P)}$ denotes the $j$-th column of $\pblock{n}{P}$ for
  $n-P+1 \le j \le P-1$. By
  Proposition~\ref{proposition:middle-entries},
  $\outspn{0}{n-P-1}(\pblock{n}{P}) = \mathbf{0}$ and the columns
  $b_j^{(P)}$ are linearly independent. Hence $\pi(v) = \mathbf{0}$
  and Equation~\eqref{eq:Pblock-2} implies that $\lambda_j= 0$ for
  $n-P+1 \le j \le P-1$. Therefore, by plugging into
  Equation~\eqref{eq:Pblock-1}, it follows that $v=0$ which completes
  the proof.
\end{proof}

It follows from Corollary~\ref{corollary:innerdim} that the ``inner''
$2P-n-1$ columns of $\A_n$, that is, the columns which span $\inspn(\A_n)$
are linearly independent. This immediately implies the next corollary.

\begin{corollary}\label{corollary:ranklb}
  Let $n\ge 2$ be an integer and $\A_n$ its valuation matrix. 

  Then $\rank(\A_n) \ge 2P-n-1$.  
\end{corollary}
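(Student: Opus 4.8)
The plan is to deduce this immediately from Corollary~\ref{corollary:innerdim}(i). The key observation is that $\rank(\A_n)$ is by definition the dimension of the column space of $\A_n$, i.e.\ the span of all $n$ columns of $\A_n$. Since $\inspn(\A_n)$ is, by its very definition, the $\Q$-span of the sub-collection of columns of $\A_n$ indexed by $n-P+1,\ldots,P-1$, it is a subspace of the column space of $\A_n$. Hence $\rank(\A_n) \ge \dim\inspn(\A_n)$, and Corollary~\ref{corollary:innerdim}(i) gives $\dim\inspn(\A_n) = 2P-n-1$, which yields $\rank(\A_n)\ge 2P-n-1$.

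So the proof is essentially a single line of bookkeeping, and I do not expect any genuine obstacle. The only thing worth remarking is that this bound is not vacuous: by Lemma~\ref{lemma:column-grouping} we have $P-1 > n-P$, equivalently $2P-n-1 > 0$, so the $2P-n-1$ columns spanning $\inspn(\A_n)$ really do form a positive-dimensional subspace. (Even without this remark the statement would hold trivially when $2P-n-1 \le 0$, since ranks are non-negative, but it is useful to record that the lower bound is strictly positive.)

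One could equivalently phrase the argument at the level of columns: Corollary~\ref{corollary:innerdim} shows the $2P-n-1$ ``inner'' columns of $\A_n$ are linearly independent, and any set of linearly independent columns of a matrix has size at most its rank. I would choose whichever phrasing reads more cleanly in context; both are one line. This lower bound is the first half of the rank computation — it will later be combined with the analysis of the $2(n-P)$ outer columns (Goal~(i) of Remark~\ref{remark:directsum}) to conclude $\rank(\A_n)=n-1$.
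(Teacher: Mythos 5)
Your proof is correct and matches the paper's: both deduce the bound immediately from Corollary~\ref{corollary:innerdim}(i), noting that the $2P-n-1$ linearly independent inner columns (equivalently, the subspace $\inspn(\A_n)$ of the column space) force $\rank(\A_n)\ge 2P-n-1$. No further comment is needed.
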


As shown below, the results so far imply the absolute irreducibility
of $\binom{x}{n}$ in the special case where $n=P$ is a prime
number. Note that this has already been shown by Frisch and
Nakato~\cite[Example~2.6]{Frisch-Nakato:2020:graph-theoretic} and our
proof only serves as an alternative.

\begin{corollary}\label{corollary:n=P}
  Let $P$ be a prime number, then $\binom{x}{P}$ is absolutely
  irreducible.
\end{corollary}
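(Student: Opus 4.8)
The plan is to reduce the claim, via Proposition~\ref{proposition:kernelA}, to the statement that $\rank(\A_P) = P-1$, and then to read this off directly from the results already established in Subsection~\ref{subsection:inner}. Once $\rank(\A_P) = P-1$ is known, Proposition~\ref{proposition:kernelA} immediately gives that $\binom{x}{P}$ is absolutely irreducible, so there is nothing further to do beyond establishing the rank.

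First I would specialize the column bookkeeping of Section~\ref{section:rank} to the degenerate case $n = P$. Here the largest prime $\le n$ is $P$ itself, so $n - P = 0$; consequently the set of ``outer'' column indices $\{0,\ldots,n-P-1\}\cup\{P,\ldots,n-1\}$ is empty, and the ``inner'' column indices $\{n-P+1,\ldots,P-1\} = \{1,\ldots,P-1\}$ already account for all columns $\{0,1,\ldots,P-1\}$ of $\A_P$ except the column $j = n-P = 0$, a total of $2P - n - 1 = P-1$ columns. Note also that $n = P$ is never one of the exceptional values, since $P$ being prime excludes $n = 2^s$ with $s>1$ and $n = 9$; hence no special care for $\calR_{n,p}$ is needed, cf.~Remark~\ref{remark:Pnoexception}.

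Then the lower bound $\rank(\A_P) \ge 2P - P - 1 = P-1$ is exactly Corollary~\ref{corollary:ranklb} (equivalently, Corollary~\ref{corollary:innerdim}(i) asserts $\dim\inspn(\A_P) = P-1$, so the $P-1$ inner columns spanning $\inspn(\A_P)$ are linearly independent). On the other hand, Remark~\ref{remark:dimker>0} gives $\rank(\A_P) < P$, hence $\rank(\A_P) \le P-1$. Combining the two inequalities yields $\rank(\A_P) = P-1$, and Proposition~\ref{proposition:kernelA} completes the proof.

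I expect no genuine obstacle here: this corollary is purely a matter of assembling Corollary~\ref{corollary:ranklb} and Remark~\ref{remark:dimker>0} in the case $n = P$, where the ``outer'' block of columns disappears and the ``inner'' columns already exhaust the $n-1$ columns whose linear independence is required. The substantive new work — controlling the outer columns — only arises when $n$ is composite and is carried out in the remainder of Section~\ref{section:rank}.
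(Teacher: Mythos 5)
Your proof is correct and is essentially identical to the paper's own argument: both combine the lower bound $\rank(\A_P)\ge 2P-P-1=P-1$ from Corollary~\ref{corollary:ranklb} with the upper bound $\rank(\A_P)<P$ from Remark~\ref{remark:dimker>0} and then invoke Proposition~\ref{proposition:kernelA}. The additional bookkeeping you include about the outer columns vanishing when $n=P$ is accurate but not needed beyond what Corollary~\ref{corollary:ranklb} already encapsulates.
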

\begin{proof}
  If $n = P$, then $\rank(\A_P) \ge 2P-P-1 = P-1$ by
  Corollary~\ref{corollary:ranklb}. Since $\rank(\A_P) < P$ by
  Remark~\ref{remark:dimker>0}, it follows $\rank(\A_P) =
  P-1$. According to Proposition~\ref{proposition:kernelA},
  $\binom{x}{P}$ is absolutely irreducible.
\end{proof}

\subsection{The outer columns of \texorpdfstring{$\A_n$}{An}} For a
composite number $n\ge 2$, the span of the ``outer'' columns of $\A_n$
is not trivial.  The goal of this section is to show that
$\dim\outspn{0}{n-P-1}(\A_n) = 2(n-P)$, where (for the remainder of
this work) $P = \max\calP_n$ is the largest prime number which is less
than or equal to $n$ (cf.~Notation~\ref{notation:pr}), see
Proposition~\ref{proposition:outerdim}. This is the final ingredient
to prove that $\rank(\A_n) = n-1$ which then further implies that
$\binom{x}{n}$ is absolutely irreducible,
cf.~Remark~\ref{remark:directsum}.

As above, we have to exploit the structure of certain $p$-blocks to reach
this goal. In contrast to the arguments in the previous subsection, we
need to find more than one suitable $p$-block.  Which choices to make
is explained in detail in the proof of
Proposition~\ref{proposition:outerdim}. The next proposition gives information
about the dimension of $\outspn{\rempn}{k-1}(\pblock{n}{p})$ for certain choices
of $k$.

\begin{proposition}\label{proposition:spansofB}
  Let $n\ge 2$ be a composite number, $p \in \calP_n$
  (cf.~Notation~\ref{notation:pr}) be a prime number and
  $\pblock{n}{p}$ the corresponding $p$-block.
  
   If $\rempn+1\le k \le \min\{n-P, \frac{p+\rempn-1}{2}\}$, then
   \begin{enumerate}
   \item $\dim \outspn{\rempn}{k-1}(\pblock{n}{p}) = 2(k-\rempn)$ and
   \item $\outspn{0}{\rempn-1}(\pblock{n}{p})  = \mathbf{0}$.
   \end{enumerate}
\end{proposition}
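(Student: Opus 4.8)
The plan is to read the relevant columns of $\pblock{n}{p}$ directly off Propositions~\ref{proposition:lefthalfpblock} and~\ref{proposition:righthalfpblock} and then establish linear independence by a three-step elimination over disjoint bands of rows.

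First I would settle the index bookkeeping. Put $v = \val_p(n-\rempn)$; since $p \mid n-\rempn$ and $n-\rempn > 0$, we have $v \ge 1$. From $\rempn \le p-1$ the bound $k \le \frac{p+\rempn-1}{2}$ gives $k \le p-1$, and together with $k \ge \rempn+1$ it gives $p \ge \rempn+3$, so the $p$-block has at least $p-\rempn-1 \ge 2$ rows whose entries in the outermost columns are fully described by Propositions~\ref{proposition:lefthalfpblock} and~\ref{proposition:righthalfpblock}. Since $k \le p-1$, the ``left'' columns $j \in \{\rempn,\ldots,k-1\}$ all lie among the leftmost $p$ columns and the ``right'' columns $j \in \{n-k,\ldots,n-1-\rempn\}$ all lie among the rightmost $p-1$ columns, so both propositions apply to them; moreover, by Lemma~\ref{lemma:column-grouping} we have $n < 2P-1$, hence $P > n/2$ and $k \le n-P < n/2$, so $2k < n$ and the two ranges of column indices are disjoint. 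Thus $\outspn{\rempn}{k-1}(\pblock{n}{p})$ is spanned by exactly $2(k-\rempn)$ distinct columns.

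Restricting to the first $p-\rempn-1$ rows, Propositions~\ref{proposition:lefthalfpblock} and~\ref{proposition:righthalfpblock} describe these columns explicitly: $c_{\rempn}$ is the constant column with every entry $-v$; for $1 \le i \le k-1-\rempn$ the column $c_{\rempn+i}$ equals $v$ times the $i$-th standard basis vector; and for $1 \le i \le k-\rempn$ the right-hand column $c_{n-\rempn-i}$ equals the $(p-\rempn-i)$-th standard basis vector. Hence, among these rows, the left-hand columns other than $c_{\rempn}$ are supported on the band of rows $\{1,\ldots,k-1-\rempn\}$ and the right-hand columns on the band $\{p-k,\ldots,p-\rempn-1\}$, and---this is the one place where the hypothesis is used in full strength---the inequality $2k \le p+\rempn-1$ forces these bands to be disjoint and leaves a nonempty ``middle'' band $\{k-\rempn,\ldots,p-k-1\}$ on which only $c_{\rempn}$ is nonzero. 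To prove linear independence I would then peel off the bands in that order: in a vanishing linear combination $\alpha c_{\rempn}+\sum_i \beta_i c_{\rempn+i}+\sum_i \gamma_i c_{n-\rempn-i}=0$, looking at any middle-band row gives $-\alpha v = 0$ and hence $\alpha = 0$; then the rows $1,\ldots,k-1-\rempn$ give $\beta_i v = 0$ and hence $\beta_i = 0$; and finally the rows $p-k,\ldots,p-\rempn-1$ give $\gamma_i = 0$. Since $v \ge 1$ all coefficients vanish, which proves~(1). For~(2): if $\rempn = 0$ the span is over the empty set of columns, hence $\mathbf{0}$; and if $\rempn \ge 1$ we are automatically outside the exceptional cases of Notation~\ref{notation:pr}, so Proposition~\ref{proposition:lefthalfpblock} (applied for $0 \le j \le \rempn-1$, where neither $j=\rempn$ nor $j=r+\rempn$ can occur) together with the elementary observation that $1 \le n-j \le \rempn < p$ and $2 \le (n-j)+r \le p-1$ for $n-\rempn \le j \le n-1$ yields $\val_p(n-j)=\val_p(n+r-j)=0$, so that every column spanning $\outspn{0}{\rempn-1}(\pblock{n}{p})$ is the zero column.

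The main difficulty is purely organizational: placing the three bands of rows correctly and, above all, verifying that the ``middle'' band is nonempty. Everything in part~(1) rests on the two arithmetic inequalities $k \le p-1$ (so that Propositions~\ref{proposition:lefthalfpblock} and~\ref{proposition:righthalfpblock} actually describe all $2(k-\rempn)$ columns involved) and $2k \le p+\rempn-1$ (so that the middle band exists and the elimination closes up), both of which are part of the hypothesis. The exceptional cases of Notation~\ref{notation:pr} cause no trouble: for $p=2$ and $n=2^s$ the bound $k \le \frac{p+\rempn-1}{2}=\frac12$ contradicts $k \ge 1$; for $n=9$ and $p=3$ it forces $k=1$; and since every exceptional case has $\rempn=0$, part~(2) is vacuous there, while the argument for part~(1) only ever reads the first $p-\rempn-1$ rows---which Propositions~\ref{proposition:lefthalfpblock} and~\ref{proposition:righthalfpblock} always describe---and adjoining the remaining rows of $\pblock{n}{p}$ cannot destroy linear independence.
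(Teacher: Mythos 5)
Your proof is correct and follows essentially the same route as the paper: it reads the relevant columns off Propositions~\ref{proposition:lefthalfpblock} and~\ref{proposition:righthalfpblock}, uses $2k\le p+\rempn-1$ to produce a nonempty middle band of rows where only the column $j=\rempn$ is nonzero, and eliminates coefficients band by band. The only (harmless) divergence is in the exceptional cases: the paper disposes of $n=9$, $p=3$ by a direct computation and derives $p>2$ to exclude the $n=2^s$ case, whereas you observe that the hypothesis is vacuous for $p=2$ and that for $n=9$, $p=3$ the general argument restricted to the first $p-\rempn-1$ rows already yields linear independence, which extra rows cannot destroy.
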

\begin{proof}
  We first treat the special case $n=9$ and
  $p=3$. Figure~\ref{fig:3block9} displays~$\pblock{9}{3}$.
    \begin{figure}[h!]
      \centering
  \adjustbox{max width=5.3cm}{
    \begin{tikzpicture}[mymatrixenv]
      \node (d) at (-2.8,0) {$\pblock{9}{3}=$};
        \matrix[mymatrix] (m) {
             {-2} &  |[alias=mo]|{2} & |[alias=inspn]|{} & 1  & 0 \\
             {-2} &  0               & \phantom{aaaa}    & 0  & 1 \\
             {-1} &  0               &                   & 0  & 0 \\                   
             {-2} & |[alias=mu]|{1}  &                   & 1  & 0 \\
           };
        \node[above=0.4cm of inspn] (in) {\footnotesize $\inspn(\pblock{9}{3})$};
%
%
        \draw[Red, dashed]  ($(mo)+(+1em,0.6em)$) -- ($(mu)+(+1em, -0.65em)$);
        \coordinate (Wlo) at ($(mo)+(+1.4em,0.7em)$);
        \coordinate (Wlu) at ($(mu)+(+1.4em,-0.5em)$);
        \coordinate[right=0.7cm of Wlu] (Wru);
        \coordinate[right=0.7cm of Wlo] (Wro);
        \draw[fill=Gray, opacity=0.2] (Wlo)
        [snake=zigzag, segment length=10pt]-- (Wlu)
        [snake=zigzag, segment length=100pt]--
        (Wru) [snake=zigzag, segment
        length=10pt]--(Wro) [snake=zigzag, segment
        length=100pt]--(Wlo);
      \end{tikzpicture}.
    }
    \caption{The $3$-block $\pblock{9}{3}$ of $9$.}
    \label{fig:3block9}
  \end{figure}
  Since $r_{9,3}=0$, the second assertion of the proposition follows
  trivially. Moreover, since $1\le k \le \frac{3+0-1}{2}=1$, the first
  assertion can be verified by direct computation.
  
  For the remainder of the proof, we assume that $n\neq 9$ or
  $p\neq 3$.  It follows from
  $\rempn+1\le k \le \min\{n-P, \frac{p+\rempn-1}{2}\}$ that
  $\rempn \le p-3$ and $2k \le p+\rempn-1$. In particular, $p> 2$
  holds and it follows that the $p$-block $\pblock{n}{p}$ has
  $p-\rempn-1 \ge 2(k-\rempn)$ rows,
  cf.~Remark~\ref{remark:exceptions}.

  Moreover, since $k\le \frac{p+\rempn-1}{2} \le p-1$, we can apply
  Propositions~\ref{proposition:lefthalfpblock}
  and~\ref{proposition:righthalfpblock} to describe the outermost left
  $k$ and right $k$ columns of the (whole) $p$-block $\pblock{n}{p}$,
  depicted in Figure~\ref{fig:pblockrank}. They are of the following
  form:
  \begin{enumerate}[(a)]
  \item The columns indexed with
    $j\in \{0,\ldots, \rempn-1\} \cup \{n-\rempn,\ldots, n-1\}$ are zero
    columns (these are the $\rempn$ left outermost and $\rempn$ right
    outermost columns).
  \item The entries of column $j=\rempn$  are all equal $-\val_p(n-\rempn)\neq 0$.
  \item The columns $\rempn+1 \le j \le k-1$ have exactly one non-zero entry,
    $\val_p(n-\rempn)$, in row $r = j-\rempn$, that is, the first $k-(\rempn+1)$
    rows of these columns form a diagonal matrix with $\val_p(n-\rempn)$ on
    the diagonal and all entries below are zero.
  \item The columns $n-k \le j \le n-(\rempn+1)$ contain exactly one non-zero
    entry, namely 1, in row $r = j-(n-p)$, that is, the lower $k-\rempn$
    rows of these columns form the identity matrix and all entries
    above are zero.
  \item Since $k < \frac{p+\rempn}{2}$ by assumption, the first $k-(\rempn+1)$
    rows and the last $k-\rempn$ rows are disjoint and there is at
    least one row in the part in the middle, namely there is at least
    one row indexed with $k-\rempn \le r \le p-k-1$.
  \end{enumerate}
  \begin{figure}[h!]
    \centering
    \begin{adjustbox}{width=\textwidth}
      \input{Bcutk.tex}
    \end{adjustbox}
    \caption{$p$-block $\pblock{n}{p}$ with $p+\rempn>2k$}
    \label{fig:pblockrank}
  \end{figure}

  It immediately follows that
  $\outspn{0}{\rempn-1}(\pblock{n}{p}) = \mathbf{0}$ since it is
  spanned only by zero columns. It remains to show that
  $\dim \outspn{\rempn}{k-1}(\pblock{n}{p}) = 2(k-\rempn)$.

  Let $b_j$ denote the $j$-th column of $\pblock{n}{p}$ and assume that there
  are $\lambda_j\in \Q$ with
  $j\in \{\rempn,\ldots, k-1\} \cup \{n-k,\ldots, n-(\rempn+1)\}$ such that
  \begin{equation}\label{eq:spanintersection}
    \sum_{j=\rempn}^{k-1}\lambda_jb_j + \sum_{j=n-k}^{n-(\rempn+1)}\lambda_jb_j = 0
  \end{equation}

  Since $b_{\rempn}$ is the only column which has a non-zero entry in
  the rows indexed with $k-\rempn \le r \le p-k-1$,
  Equation~\eqref{eq:spanintersection} implies that
  $\lambda_{\rempn}=0$.  Now for each of the remaining rows, there is
  exactly one column with a non-zero entry in this (and no other)
  row. With the same reasoning we can conclude that $\lambda_j = 0$
  for $ \rempn+1 \le j \le k-1$ and $n-k\le j\le n-(\rempn+1)$.
  Therefore, these $2(k-\rempn)$ columns of $\pblock{n}{p}$ are
  linearly independent.
\end{proof}

\begin{proposition}\label{proposition:outerdim}
  Let $n\ge 2$ be a composite integer and $\A_n$ its valuation matrix. 

  Then $\dim\outspn{0}{n-P-1}(\A_n) = 2(n-P)$.  
\end{proposition}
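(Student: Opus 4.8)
The plan is to establish the equivalent assertion that the $2(n-P)$ columns of $\A_n$ indexed by $\{0,1,\ldots,n-P-1\}\cup\{P,P+1,\ldots,n-1\}$, that is, the generators of $\outspn{0}{n-P-1}(\A_n)$, are linearly independent. I would prove this by strong induction on $\ell\in\{0,1,\ldots,n-P\}$, showing at step $\ell$ that the $2\ell$ columns indexed by $\{0,\ldots,\ell-1\}\cup\{n-\ell,\ldots,n-1\}$ (the generators of $\outspn{0}{\ell-1}(\A_n)$) are linearly independent; the case $\ell=0$ is vacuous, and the proposition is the case $\ell=n-P$. The finitely many composite $n\le 10$, and the base case $\ell=1$ when $n$ is a power of $2$, I would handle by direct computation --- for a power of $2$ via the exceptional block $\pblock{n}{2}$, for $n=9$ via $\pblock{9}{3}$, and for $n=10$ by hand.

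For the inductive step with $\ell\ge 2$ and $n>10$, the heart of the matter is choosing the right prime. Since $\ell\le n-P$, the integers $n,n-1,\ldots,n-\ell+1$ are all composite, so Theorem~\ref{theorem:numbertheory} with $k=\ell$ produces a prime $p>2\ell$ dividing one of them, say $p\mid n-i$ with $0\le i\le\ell-1$; then $\rempn=i$, and $p\le n$ with $p\ge 5$, so $p\in\calP_n$ and $\pblock{n}{p}$ is not an exceptional block. I would then verify the hypotheses of Proposition~\ref{proposition:spansofB} for this $p$ with its parameter taken to be $\ell$: indeed $\rempn+1=i+1\le\ell$, $\ell\le n-P$, and $\ell\le\frac{p+\rempn-1}{2}$ because $p>2\ell$ forces $p+i-1\ge 2\ell$. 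The proposition then gives $\dim\outspn{\rempn}{\ell-1}(\pblock{n}{p})=2(\ell-i)$ and $\outspn{0}{\rempn-1}(\pblock{n}{p})=\mathbf{0}$.

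From there the linear independence follows by a two-step elimination applied to a hypothetical relation $\sum_j\lambda_j a_j=0$, where $j$ ranges over $\{0,\ldots,\ell-1\}\cup\{n-\ell,\ldots,n-1\}$ and $a_j$ denotes the $j$-th column of $\A_n$. Restricting the relation to the rows that form the sub-block $\pblock{n}{p}$ of $\A_n$, the columns indexed by $\{0,\ldots,i-1\}\cup\{n-i,\ldots,n-1\}$ drop out, since $\outspn{0}{\rempn-1}(\pblock{n}{p})=\mathbf{0}$ means these are zero columns of $\pblock{n}{p}$; the remaining columns, indexed by $\{i,\ldots,\ell-1\}\cup\{n-\ell,\ldots,n-i-1\}$, are linearly independent in $\pblock{n}{p}$, so the corresponding $\lambda_j$ all vanish. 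What is left is a relation $\sum_{j\in\{0,\ldots,i-1\}\cup\{n-i,\ldots,n-1\}}\lambda_j a_j=0$ among the generators of $\outspn{0}{i-1}(\A_n)$, and since $i\le\ell-1<\ell$ the induction hypothesis forces the remaining $\lambda_j$ to be zero as well. The base case $\ell=1$ for $n$ not a power of $2$ is the same restriction argument with $p$ taken to be any odd prime factor of $n$, so that $\rempn=0$ and Proposition~\ref{proposition:spansofB} applies with parameter $1$ because $\frac{p-1}{2}\ge 1$.

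I expect the only genuine obstacle to be the existence of a suitable prime in the inductive step: Proposition~\ref{proposition:spansofB} is applicable only once the chosen prime exceeds $2\ell$, and it is precisely Theorem~\ref{theorem:numbertheory} that guarantees such a prime among the divisors of $n,\ldots,n-\ell+1$ whenever $n>10$. The handful of small composite $n$ (and the power-of-$2$ base case), where the theorem is unavailable, are exactly the cases that must be, and can be, checked directly.
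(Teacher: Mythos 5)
Your proposal is correct and follows essentially the same route as the paper: induction on the number of outer column pairs, with Theorem~\ref{theorem:numbertheory} supplying a prime $p>2\ell$ with $\rempn\le \ell-1$, Proposition~\ref{proposition:spansofB} giving the structure of the corresponding $p$-block, and the restriction-to-the-block elimination reducing to the induction hypothesis; the small cases $n=9$, $n=10$ and the power-of-two base case are handled by the same direct computations. Recasting the statement as linear independence of the generating columns rather than as a dimension count of a direct sum is only a cosmetic difference.
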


\begin{proof}
  We treat the cases $n=9$ and $n=10$ first as we want to exclude them
  below. For $n=9$, observe that $P=7$ and hence $n-P=2$.  It follows
  from a direct verification that the outer four columns of
  $\pblock{9}{3}$ (displayed in Figure~\ref{fig:3block9}) are linearly
  independent and hence $\dim\outspn{0}{1}(\pblock{9}{3}) = 4$. Since
  the projection
  $\outspn{0}{1}(\A_9) \to \outspn{0}{1}(\pblock{9}{3})$ is an
  epimorphism, it follows that $\dim\outspn{0}{1}(\A_9) = 4$.

  The valuation matrix $\A_{10}$ is displayed in
  Figure~\ref{fig:10block}. A direct computation verifies that
  $\rank(\outspn{0}{2}(\A_{10})) = 6$.
  \begin{figure}[h!]
    \centering
    \begin{tikzpicture}[mymatrixenv]
        \node (d) at (-4.1,0) {$\A_{10} = $};
        \matrix[mymatrix] (m) {
             {-1} &  1               & |[alias=mo]|{-3} &|[alias=inspn]|{}& 2  & -1 & {1} \\
              {}  &                  &                  &                 &    &    & {}  \\                   
              {0} & -2               &               2  &\phantom{aaaa}   & -1 & 1  & {0} \\
              {}  &                  &                  &                 &    &    & {}  \\                   
             {-1} & 1                &               0  &\phantom{aaaa}   & 0  & 0  & {0}  \\
             {-1} & 0                &               1  &\phantom{aaaa}   & 1  & 0  & {0}  \\
             -1   & 0                &               0  &\phantom{aaaa}   & 0  & 1  & {0}  \\
             -1   & 0                &    0             &\phantom{aaaa}   & 0  & 0  & {1}  \\
              {}  &                  &                  &                 &    &    &  {}   \\                   
              {}  & |[scale=1.5]|{0} &|[alias=mu]|{}    &                 &    &|[scale=1.5]|{0} & {}\\
           };
           \node[above=0.4cm of inspn] (in) {\footnotesize $\inspn(\A_{10})$};

           \mymatrixbraceright{1}{1}{\footnotesize $\pblock{10}{2}$}
           \mymatrixbraceright{3}{3}{\footnotesize $\pblock{10}{3}$}
           \mymatrixbraceright{5}{8}{\footnotesize $\pblock{10}{5}$}
           \draw[mymatrixbrace] ($(m-10-7.north east)+(16pt,10pt)$)
           -- node[right=2pt] {\footnotesize $\pblock{10}{7}$}        ($(m-10-7.south east) + (16pt,-3pt)$);
        
        \coordinate (Wlo) at ($(mo)+(+1.9em,0.7em)$);
        \coordinate (Wlu) at ($(mu)+(+1.9em,-0.3em)$);
        \coordinate[right=0.7cm of Wlu] (Wru);
        \coordinate[right=0.7cm of Wlo] (Wro);

        \draw[fill=Gray, opacity=0.2] (Wlo)
        [snake=zigzag, segment length=10pt]-- (Wlu)
        [snake=zigzag, segment length=100pt]--
        (Wru) [snake=zigzag, segment
        length=10pt]--(Wro) [snake=zigzag, segment
        length=100pt]--(Wlo);

        \draw[Red, dashed]  ($(mo)+(+1.3em,0.6em)$) -- ($(mu)+(+1.3em, -0.4em)$);
        
        \draw[ForestGreen, rounded corners] ($(m-1-1)+(-0.8em,0.7em)$) rectangle ($(m-1-7)+(0.5em,-0.7em)$);
        \draw[ForestGreen, rounded corners] ($(m-3-1)+(-0.8em,0.7em)$) rectangle ($(m-3-7)+(0.5em,-0.7em)$);
        \draw[ForestGreen, rounded corners] ($(m-5-1)+(-0.8em,0.7em)$) rectangle ($(m-8-7)+(0.5em,-0.7em)$);
        \draw[ForestGreen, rounded corners] ($(m-10-1)+(-0.8em,1.2em)$) rectangle ($(m-10-7)+(0.5em,-0.3em)$);
    \end{tikzpicture}.
    \caption{$\outspn{0}{2}(\A_{10})$ has dimension $6$.} 
    \label{fig:10block}
  \end{figure}

  From now on, assume that $n\ge 2$ is a composite integer with
  $n\neq 9$ and $n\neq 10$.  We prove by induction on $1\le k \le n-P$
  that
  \begin{equation*}
    \dim \outspn{0}{k-1}(\A_n) = 2k
  \end{equation*}
  holds.

  \textbf{Base case $k=1$}.  Since the projection
  $\outspn{0}{0}(\A_n) \to \outspn{0}{0}(\pblock{n}{p})$ is an
  epimorphims, it follows that
  $2 \ge\dim\outspn{0}{0}(\A_n) \ge \dim\outspn{0}{0}(\pblock{n}{p})$
  for all $p\in \calP_n$. Therefore, it suffices to show that there
  always exists a prime number $p$ with
  $\dim\outspn{0}{0}(\pblock{n}{p}) = 2$.

  If $n = 2^s$ is a proper power of $2$, then
  $\calR_{2^s,2} = \{1,2\}$ by definition which implies that
  $\pblock{2^s}{2}$ has two rows. The outermost columns of
  $\pblock{2^s}{2}$ are displayed in Figure~\ref{fig:2block-proof}.
  \begin{figure}[h]
    \centering
      \begin{tikzpicture}[mymatrixenv]
        \node (d) at (-3.4,0) {$\pblock{2^s}{2}=$};
       \matrix[mymatrix] (m) {
          |[alias=mo]|{-\val_2(n)} & * &|[alias=inspn]|{\phantom{aaaa}}& *  & 1 \\
          |[alias=mu]|{1-\val_2(n)}& * &                               & *  & 0 \\
        };
        \node[above=0.2cm of inspn] (in) {\footnotesize $\inspn(\pblock{2^s}{2})$};

        \draw[Red, dashed]  ($(mo)+(+4.5em,0.7em)$) -- ($(mu)+(+4.5em, -0.75em)$);
        
        \coordinate (Wlo) at ($(mo)+(+5em,0.7em)$);
        \coordinate (Wlu) at ($(mu)+(+5em,-0.7em)$);
        \coordinate[right=0.5cm of Wlu] (Wru);
        \coordinate[right=0.5cm of Wlo] (Wro);

        \draw[fill=Gray, opacity=0.2] (Wlo)
        [snake=zigzag, segment length=10pt]-- (Wlu)
        [snake=zigzag, segment length=100pt]--
        (Wru) [snake=zigzag, segment
        length=10pt]--(Wro) [snake=zigzag, segment
        length=100pt]--(Wlo);
      \end{tikzpicture}
    \caption{$\pblock{2^s}{2}$}
    \label{fig:2block-proof}
  \end{figure}
  As $\val_2(n)=s \ge 2$, it follows that
  $\dim\outspn{0}{0}(\pblock{2^s}{2}) = 2$.
  
  If $n$ is not a power of $2$, then $n$ has a prime divisor $p\ge
  3$. It follows that $\rempn=0$ and hence
  $1 = \rempn + 1 \le k=1 \le \min\{n-P,\frac{p-1}{2}\}$. Therefore, we can
  apply Proposition~\ref{proposition:spansofB} and conclude that
  $\dim\outspn{0}{0}(\pblock{n}{p}) = 2$.  This completes the proof of
  the base case.

  Observe that the induction base case also covers the cases $n=4$,
  $n=6$ and $n=8$ because in each of these cases $n-P = 1$. Therefore,
  we already handled all cases where $n\le 10$.

  We proceed with the \textbf{induction step} and prove the following claim.
  
  \noindent\textbf{Claim}. There exists $p\in \calP_n$ with $\rempn \le k-1$ such
  that
  \begin{enumerate}
  \item\label{rankprf-1} $\dim \outspn{\rempn}{k-1}(\A_n) = 2(k-\rempn)$ and
  \item\label{rankprf-2} $\outspn{0}{\rempn-1}(\A_n) \cap \outspn{\rempn}{k-1}(\A_n) = \mathbf{0}$. 
  \end{enumerate}
  Assume for a moment that this claim holds. Since $\rempn < k$ we can
  apply the induction hypothesis, that is,
  $\dim \outspn{0}{\rempn-1}(\A_n) = 2\rempn$ and hence
  \begin{align*}
    \dim \outspn{0}{k-1}(\A_n) &= \dim \left(\outspn{0}{\rempn-1}(\A_n) \oplus \outspn{\rempn}{k-1}(\A_n)\right) \\
    &= \dim \outspn{0}{\rempn-1}(\A_n) + \dim\outspn{\rempn}{k-1}(\A_n) \\
                            &= 2\rempn + 2(k-\rempn) \\
                            &= 2k
  \end{align*}
  and the assertion follows.

  Next, we prove the claim. By Theorem~\ref{theorem:numbertheory} (for
  which we needed to assume that $n> 10$), there exists $p\in \calP_n$
  such that $p > 2k$ and $\rempn\le k-1$.

  Therefore
  $\rempn+1 \le k \le \min\{n-P, \frac{p-1}{2}\} \le \min\{n-P,
  \frac{p+\rempn-1}{2}\}$ and hence, by
  Proposition~\ref{proposition:spansofB},
  $\dim \outspn{\rempn}{k-1}(\pblock{n}{p}) = 2(k-\rempn)$ holds.
  
  Let
  $\pi: \outspn{\rempn}{k-1}(\A_n) \to
  \outspn{\rempn}{k-1}(\pblock{n}{p})$ be the projection. As $\pi$ is
  an epimorphism, it follows that
  $\dim \outspn{\rempn}{k-1}(\A_n) \ge \dim \outspn{\rempn}{k-1}(\pblock{n}{p})$.  Hence
  \eqref{rankprf-1} of the claim above follows.

  To prove \eqref{rankprf-2} of the claim, assume that
  $v \in \outspn{0}{\rempn-1}(\A_n) \cap \outspn{\rempn}{k-1}(\A_n)$
  and let
  $\calJ_0 = \{j \mid 0\le j \le \rempn-1 \text{ or } n-\rempn \le j
  \le n-1\}$ and
  $\calJ_1=\{j \mid \rempn \le j \le k-1 \text{ or } n-k\le j \le
  n-(\rempn+1) \}$ be the indices of the columns of $\A_n$ which span
  $\outspn{0}{\rempn-1}(\A_n)$ and $\outspn{\rempn}{k-1}(\A_n)$,
  respectively.

  For $0 \le j \le n-1$, let $a_{j}$ denote the $j$-th column of $\A_n$
  and $\lambda_j\in \Q$ such that
  \begin{equation}\label{eq:pbl-1}
    v = \sum_{j\in \calJ_0}\lambda_j a_j  = \sum_{j\in \calJ_1}\lambda_j a_j\in \outspn{0}{\rempn-1}(\A_n) \cap \outspn{\rempn}{k-1}(\A_n).
  \end{equation}
  Then
  \begin{equation}\label{eq:pbl-2}
    \pi(v) = \sum_{j\in \calJ_0}\lambda_j b_j^{(p)} = \sum_{j\in\calJ_1}\lambda_j b_j^{(p)}\in \outspn{0}{\rempn-1}(\pblock{n}{p}) \cap \outspn{\rempn}{k-1}(\pblock{n}{p})
  \end{equation}
  where $b_j^{(p)}$ denotes the $j$-th column of
  $\pblock{n}{p}$. However, by Proposition~\ref{proposition:spansofB},
  $\outspn{0}{\rempn-1}(\pblock{n}{p}) = \mathbf{0}$ and hence
  Equation~\eqref{eq:pbl-2} reduces to
  \begin{equation*}
    0 = \pi(v) =  \sum_{j\in \calJ_1}\lambda_j b_j^{(p)}.
  \end{equation*}
  As the columns $b_j^{(p)}$ with $j\in\calJ_1$ of $\pblock{n}{p}$ are
  linearly independent by Proposition~\ref{proposition:spansofB}, it
  follows that $\lambda_j= 0$ for all $j\in \calJ_1$. Therefore,
  plugging into Equation~\eqref{eq:pbl-1}, it follows that $v = 0$
  which completes the proof of the claim.

\end{proof}

Proposition~\ref{proposition:outerdim} together with
Corollary~\ref{corollary:innerdim} imply that $\rank(\A_n) = n-1$ for
all composite numbers $n\ge 2$,
cf.~Remark~\ref{remark:directsum}. Using
Proposition~\ref{proposition:kernelA} yields the corollary below.

\begin{corollary}\label{corollary:ncomposite}
  Let $n\ge 2$ be a composite number.

  Then $\binom{x}{n}$ is absolutely irreducible.
\end{corollary}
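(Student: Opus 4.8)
The plan is to deduce this directly from the rank computation that has been assembled in this section, following the roadmap laid out in Remark~\ref{remark:directsum}. By Proposition~\ref{proposition:kernelA}, it suffices to prove that $\rank(\A_n) = n-1$. For this I would split the $n-1$ columns of $\A_n$ indexed by $\{0,1,\ldots,n-1\}\setminus\{n-P\}$ (where $P = \max\calP_n$) into the ``outer'' columns spanning $\outspn{0}{n-P-1}(\A_n)$ and the ``inner'' columns spanning $\inspn(\A_n)$, which is a legitimate partition since $n-P < P-1$ by Lemma~\ref{lemma:column-grouping}.

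The three ingredients needed are precisely the goals (i)--(iii) of Remark~\ref{remark:directsum}. Goal~(i), that $\dim\outspn{0}{n-P-1}(\A_n) = 2(n-P)$, is exactly Proposition~\ref{proposition:outerdim}, which applies because $n$ is composite. Goals~(ii) and~(iii), that $\dim\inspn(\A_n) = 2P-n-1$ and $\outspn{0}{n-P-1}(\A_n)\cap\inspn(\A_n) = \mathbf{0}$, are exactly Corollary~\ref{corollary:innerdim}. Putting these together, the sum $\inspn(\A_n) + \outspn{0}{n-P-1}(\A_n)$ is direct and has dimension $2(n-P) + (2P-n-1) = n-1$, so $\rank(\A_n)\ge n-1$. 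On the other hand, Remark~\ref{remark:dimker>0} gives $\rank(\A_n) < n$ because the all-ones vector lies in $\ker(\A_n)$ by Lemma~\ref{lemma:equality}. Hence $\rank(\A_n) = n-1$, and Proposition~\ref{proposition:kernelA} yields that $\binom{x}{n}$ is absolutely irreducible.

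At this stage there is no genuine obstacle left: the substantive work was already done, on the inner side by exploiting the clean structure of the $P$-block $\pblock{n}{P}$ (which, by Remark~\ref{remark:Pnoexception}, never falls into an exceptional case), and on the outer side by the inductive argument of Proposition~\ref{proposition:outerdim}, whose key input is the number-theoretic Theorem~\ref{theorem:numbertheory} guaranteeing, for each $2\le k\le n-P$, a prime $p>2k$ dividing one of $n,n-1,\ldots,n-k+1$ together with the small-case verifications for $n\in\{9,10\}$. The proof of the corollary itself is just the bookkeeping that combines Proposition~\ref{proposition:outerdim}, Corollary~\ref{corollary:innerdim}, and Proposition~\ref{proposition:kernelA}.
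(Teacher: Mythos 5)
Your proposal is correct and follows exactly the paper's route: combining Proposition~\ref{proposition:outerdim} and Corollary~\ref{corollary:innerdim} via the direct-sum decomposition of Remark~\ref{remark:directsum} to get $\rank(\A_n)=n-1$, then invoking Proposition~\ref{proposition:kernelA}. The bookkeeping, including the dimension count $2(n-P)+(2P-n-1)=n-1$ and the upper bound from Remark~\ref{remark:dimker>0}, matches the paper's argument.
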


\renewcommand{\MR}[1]{}
\bibliography{bibliography}
\bibliographystyle{amsplainurl}

\end{document}